\newcommand{\FI}{\mathbf{FI}}
\newcommand{\Res}{\mathrm{Res}}
\DeclareMathOperator{\maxdeg}{maxdeg}
\title{Regularity of $\FI$-modules and local cohomology}
\date{December 26, 2017}
\author{Rohit Nagpal}
\address{Department of Mathematics, University of Chicago, Chicago, IL}
\email{\href{mailto:nagpal@math.uchicago.edu}{nagpal@math.uchicago.edu}}
\urladdr{\url{http://math.uchicago.edu/~nagpal/}}
\author{Steven V Sam}
\address{Department of Mathematics, University of Wisconsin, Madison, WI}
\email{\href{mailto:svs@math.wisc.edu}{svs@math.wisc.edu}}
\urladdr{\url{http://math.wisc.edu/~svs/}}
\thanks{SS was partially supported by NSF grant DMS-1500069.}
\author{Andrew Snowden}
\address{Department of Mathematics, University of Michigan, Ann Arbor, MI}
\email{\href{mailto:asnowden@umich.edu}{asnowden@umich.edu}}
\urladdr{\url{http://www-personal.umich.edu/~asnowden/}}
\thanks{AS was partially supported by NSF grants DMS-1303082 and DMS-1453893 and a Sloan Fellowship.}
\subjclass[2010]{%
13D45, 
20C30
}
\begin{document}

\begin{abstract}
We resolve a conjecture of Ramos and Li that relates the regularity of an $\FI$-module to its local cohomology groups. This is an analogue of the familiar relationship between regularity and local cohomology in commutative algebra.
\end{abstract}

\maketitle

\section{Introduction}

Let $S$ be a standard-graded polynomial ring in finitely many variables over a field $\bk$, and let $M$ be a non-zero finitely generated graded $S$-module. It is a classical fact in commutative algebra that the following two quantities are equal (see \cite[\S 4B]{geomsyzygies}): 
\begin{itemize}[\hspace{8pt}$\bullet$]
\item The minimum integer $\alpha$ such that $\Tor^S_i(M, \bk)$ is supported in degrees $\le \alpha+i$ for all $i$.
\item The minimum integer $\beta$ such that $\rH^i_{\fm}(M)$ is supported in degrees $\le \beta-i$ for all $i$.
\end{itemize}
Here $\rH^i_{\fm}$ is local cohomology at the irrelevant ideal $\fm$. The quantity $\alpha=\beta$ is called the {\bf (Castelnuovo--Mumford) regularity} of $M$, and is one of the most important numerical invariants of $M$. In this paper, we establish the analog of the $\alpha=\beta$ identity for $\FI$-modules.

To state our result precisely, we must recall some definitions. Let $\FI$ be the category of finite sets and injections. Fix a commutative noetherian ring $\bk$. An {\bf $\FI$-module} over $\bk$ is a functor from $\FI$ to the category of $\bk$-modules. We write $\Mod_{\FI}$ for the category of $\FI$-modules. We refer to \cite{fimodules} for a general introduction to $\FI$-modules.

Let $M$ be an $\FI$-module. Define $\Tor_0(M)$ to be the $\FI$-module that assigns to $S$ the quotient of $M(S)$ by the sum of the images of the $M(T)$, as $T$ varies over all proper subsets of $S$. Then $\Tor_0$ is a right-exact functor, and so we can consider its left derived functors $\Tor_{\bullet}$. In \S \ref{sec:local-cohomology}, we explain how $\Tor_{\bullet}$ is the derived functor of a tensor product. We note that the $\FI$-module homology considered in \cite{castelnuovo-regularity} is the same as our $\Tor_{\bullet}$. We let $t_i(M)$ be the maximum degree occurring in $\Tor_i(M)$ (using the convention $t_i(M)=-\infty$ if $\Tor_i(M)=0$), and define the {\bf regularity} of $M$, denoted $\reg(M)$, to be the minimum integer $\rho$ such that $t_i(M) \le \rho+i$ for all $i$. We note that, while most $\FI$-modules have infinite projective (and $\Tor$) dimension, every finitely generated $\FI$-module  has finite regularity; see \cite[Theorem~A]{castelnuovo-regularity} or Corollary~\ref{cor:finite-regularity} below.

An element $x \in M(S)$ is {\bf torsion} if there exists an injection $f \colon S \to T$ such that $f_*(x)=0$. Let $\rH^0_{\fm}(M)$ be the maximal torsion submodule of $M$. Then $\rH^0_{\fm}$ is a left-exact functor, and so we can consider its right derived functors $\rH^{\bullet}_{\fm}$, which we refer to as {\bf local cohomology}. If $M$ is finitely generated then each $\rH^i_{\fm}(M)$ is finitely generated and torsion, and $\rH^i_{\fm}(M)=0$ for $i \gg 0$ (see Proposition~ \ref{prop:loccoh}). We let $h^i(M)$ be the maximum degree occurring in $\rH^i_{\fm}(M)$, with the convention that $h^i(M)=-\infty$ if $\rH^i_{\fm}(M)=0$.

We can now state the main result of this paper:

\begin{theorem} \label{thm:main-intro}
Let $M$ be a finitely generated $\FI$-module. Then
\addtocounter{equation}{-1}
\begin{subequations}
\begin{equation} \label{eq:main}
\reg(M) = \max \big( t_0(M), \max_{i \ge 0} (h^i(M)+i) \big).
\end{equation}
Moreover, we have
\begin{displaymath}
t_n(M) = n + \max_{i \ge 0} (h^i(M)+i)
\end{displaymath}
for all $n \gg 0$. In particular, 
\[
\max_{n > 0}(t_n(M) - n) = \max_{i \ge 0}(h^i(M) + i).
\]
\end{subequations}
\end{theorem}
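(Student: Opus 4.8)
The plan is to exploit the standard "shift" or "derivative" functor on $\FI$-modules together with a Koszul-type complex that computes local cohomology from $\Tor$ groups. The key formal input will be a version of local duality: there should be a spectral sequence (or an explicit complex) relating $\rH^i_{\fm}(M)$ to the $\Tor$-groups of $M$ against a fixed resolution of the residue object. Concretely, I would first establish that for a finitely generated $\FI$-module $M$, the torsion part $\rH^0_{\fm}(M)$ is finitely generated and the quotient $M/\rH^0_{\fm}(M)$ is torsion-free, so by a standard dévissage it suffices to treat the case where $M$ is torsion and the case where $M$ is torsion-free separately. In the torsion case all $\Tor_n(M)$ vanish for $n$ large relative to the torsion degree, and $\rH^i_{\fm}(M)$ is concentrated at $i=0$; here \eqref{eq:main} and the stable formula for $t_n$ are checked directly. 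In the torsion-free case, $\rH^0_{\fm}(M)=0$, and one embeds $M$ into an injective $\FI$-module $I$ with torsion cokernel $Q$; this gives a long exact sequence relating $\rH^i_{\fm}(M)$ to $\rH^{i-1}_{\fm}(Q)$ and hence shifts the cohomology index, which is the mechanism behind the $+i$ in the formula.

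The core of the argument is the stable statement $t_n(M) = n + \max_{i\ge 0}(h^i(M)+i)$ for $n \gg 0$; the first displayed equation follows from it together with a separate elementary bound showing $t_0(M)$ and each $h^i(M)+i$ are $\le \reg(M)$. To get the stable statement, I would build the minimal free resolution $P_\bullet \to M$ and analyze its tail. The derived functors of $\rH^0_{\fm}$ can be computed by a complex $\hI$ of injective (co-induced) $\FI$-modules, and the hypercohomology spectral sequence of the double complex $\Hom(P_\bullet, \hI^\bullet)$ has one page computing $\rHom$ of $M$ into the dualizing complex and the other page built from $\rH^i_{\fm}$ of the free modules, which are well understood. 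Comparing the two filtrations, and using that the contribution of a free module $\bk[\Hom(S,-)]$ in homological degree $n$ to $t_n$ is exactly its generation degree, yields the claimed identity after the boundary effects (small $n$) have washed out. A clean way to package this is: define $\gamma = \max_{i\ge 0}(h^i(M)+i)$; show $t_n(M) \le n + \gamma$ for all $n$ by an inductive argument on the length of a torsion-free filtration using the long exact sequence above; then show $t_n(M) \ge n+\gamma$ for $n \gg 0$ by locating, via the spectral sequence, a nonzero class in $\Tor_n$ in degree exactly $n+\gamma$ coming from the local cohomology group that attains the maximum.

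The final displayed equation is then a formal consequence: for $n>0$ large we have $t_n(M)-n = \gamma$, and for the remaining finitely many $n>0$ the general bound $t_n(M)\le n+\gamma$ shows $t_n(M)-n \le \gamma$, so the maximum over all $n>0$ equals $\gamma = \max_{i\ge 0}(h^i(M)+i)$.

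I expect the main obstacle to be making the duality/spectral-sequence comparison precise enough to extract \emph{exact} degrees rather than just bounds — i.e., proving that the top-degree class in $\rH^i_{\fm}(M)$ really does survive to produce a nonzero $\Tor_n$ class in the predicted degree, with no cancellation. This is where the minimality of the resolution and a careful bookkeeping of internal degrees along both spectral sequence filtrations is essential; the torsion-free dévissage is designed precisely to control this, since each step changes the cohomological degree by one in a transparent way and injective $\FI$-modules have $\rH^i_{\fm}$ concentrated in a single, computable degree.
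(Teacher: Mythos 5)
There is a genuine gap, and it sits exactly at the point you yourself flag as "the main obstacle." The heart of this theorem is the lower bound: showing that for $n \gg 0$ the group $\Tor_n(M)$ really is nonzero in degree exactly $n+\gamma$, i.e.\ that no cancellation occurs in the long exact sequences / spectral sequence comparing $M$ with its torsion and semi-induced pieces. Your proposal defers this to "careful bookkeeping of internal degrees" along a duality spectral sequence, but no mechanism is offered that could rule out cancellation, and degree bookkeeping alone cannot: the potentially cancelling terms live in the same internal degree. The paper's solution is to introduce an extra invariant $\nu$ of symmetric group representations (a characteristic-free surrogate for the number of rows of the partitions appearing), built from "good" idempotent ideals in $\bk[S_2]$ or $\bk[S_3]$ after localizing to invert $2$ or $3$; the terms that might cancel are shown to have different $\nu$-values, so a class must survive. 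Nothing in your outline plays this role, and in positive characteristic there is no off-the-shelf replacement, so the argument does not close.

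There are also two concrete missteps in the dévissage. First, your torsion case is wrong: for a nonzero torsion module the $\Tor$ groups do \emph{not} vanish for large $n$ --- e.g.\ for $T$ an $\bA/\bA_+$-module one has $\Tor_p(T)=T\otimes\sgn_p$ for all $p$ --- and in fact the stable formula $t_n(M)=n+\maxdeg(M)$ for torsion $M$ is itself a nontrivial statement requiring the no-cancellation argument (this is the paper's Lemma~\ref{lem:torsion-reg}, proved with $\nu$). Second, a finitely generated torsion-free $\FI$-module over a noetherian ring cannot in general be embedded in an injective (or semi-induced) module with \emph{torsion} cokernel in a single step; the correct tool is the structure theorem (Theorem~\ref{thm:semires}, from Nagpal's theorem), which produces a triangle $T \to M \to F$ with $T$ a bounded complex of finitely generated torsion modules and $F$ a bounded complex of semi-induced modules. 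Using that triangle, $\Tor_n(M)=\Tor_n(T)$ for $n>0$ and $\rH^i_{\fm}(M)=\rH^i(T)$, which reduces everything to a statement about complexes of torsion modules (Proposition~\ref{prop:torsion-complex}); no local duality or dualizing complex --- which the literature does not readily supply in this setting and which your sketch leaves unconstructed --- is needed.
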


\begin{remark}
If $M$ is a module over a polynomial ring in finitely many variables then one can omit the $t_0(M)$ on the right side of \eqref{eq:main}. However, it is necessary in the case of $\FI$-modules. Indeed, if $M$ is the $\FI$-module given by $M(S)=\bk$ for all $S$ and all injections act as the identity, then all local cohomology groups of $M$ vanish, so $h^i(M)=-\infty$ for all $i$, but $\reg(M)=t_0(M)=0$.
\end{remark}

\begin{remark}
Theorem~\ref{thm:main-intro} can be proved for $\FI$-modules presented in finite degrees. We have restricted ourselves to finitely generated modules to keep the paper less technical.
\end{remark}

\begin{remark}
The theorem was first conjectured by Li and Ramos \cite[Conjecture~1.3]{li-ramos}. In fact, they conjectured the result for $\FI_G$-modules, where $G$ is a finite group. The version for $\FI_G$-modules follows immediately from the version for $\FI$-modules, since local cohomology and regularity do not depend on the $G$-action: we clearly have $\Psi \rH^0_{\fm}(M) = \rH^0_{\fm}(\Psi M)$ and $\Psi \Tor_0(M) = \Tor_0(\Psi M)$ where $\Psi \colon \Mod_{\FI_G} \to \Mod_{\FI}$ is the forgetful functor, and since $\Psi$ preserves both the injective and the projective objects these results extend to higher derived functors as well.
\end{remark}

\subsection*{Overview of proof}

Using the structure theorem for $\FI$-modules (Theorem~\ref{thm:semires}), an easy spectral sequence argument shows that the regularity of $M$ is at most the maximum of $h^i(M)+i$. Theorem~\ref{thm:main-intro} essentially says that there is not too much cancellation in this spectral sequence.

In characteristic~0, one can see this as follows. Let $\bM_{\lambda}$ be the irreducible representation of $S_n$ corresponding to the partition $\lambda$. Let $\ell(\lambda)$ be the number of parts in $\lambda$. For a representation $V$ of $S_n$, define $\ell(V)$ to be the maximum $\ell(\lambda)$ over those $\lambda$ for which $\bM_{\lambda}$ occurs in $V$. Now consider the relevant spectral sequence. One can directly observe that various terms in the spectral sequence have different $\ell$ values, and so some representations must always survive on the subsequent page. This proves that there is not too much cancellation.

In positive characteristic, there does not seem to be a complete analog of $\ell$. However, we construct an invariant $\nu$ that has some of the same properties. This is one of the key insights of this paper. The invariant $\nu$ is strong enough to distinguish terms in the spectral sequence, and thus allows the characteristic~0 argument to be carried out.

\subsection*{Outline of paper} 

In \S \ref{sec:local-cohomology}, we review some basic results on local cohomology of $\FI$-modules. In \S \ref{sec:combinatorics}, we define the invariant $\nu$ mentioned above and establish some of its basic properties. These results are combined in \S \ref{sec:regularity} to obtain Theorem~\ref{thm:main-intro}.

\subsection*{Acknowledgments}

We thank Eric Ramos for pointing out an error in an earlier version of this paper, and we thank Peter Patzt for several helpful comments and a thorough reading of the first draft of this paper.

\section{Preliminaries on $\FI$-modules} \label{sec:local-cohomology}

We fix a commutative noetherian ring $\bk$ for the entirety of the paper. We set  $[0]=\emptyset$, and for each positive integer $n$, we set $[n]=\{1,\dots,n\}$. Let $\Rep(S_{\star})$ be the category of sequences of representations of the symmetric groups over $\bk$. Given $V_{\bullet}$ and $W_{\bullet}$ in $\Rep(S_{\star})$, we define their tensor product by
\begin{displaymath}
(V_{\bullet} \otimes W_{\bullet})_n = \bigoplus_{i+j=n} \Ind_{S_i \times S_j}^{S_n} (V_i \otimes W_j).
\end{displaymath}
Then $\otimes$ endows $\Rep(S_{\star})$ with a monoidal structure (this is easier to see using the equivalence described in \cite[(5.1.6), (5.1.8)]{expos}). Furthermore, there is a symmetry of this monoidal structure by switching the order of $V$ and $W$ and conjugating $S_i \times S_j$ to $S_j \times S_i$ via the element $\tau_{ij} \in S_n$ which swaps the order of the two subsets $1,\dots,i$ and $i+1,\dots,n$. We thus have notions of commutative algebra and module objects in $\Rep(S_{\star})$.

Let $\bA=\bk[t]$, where $t$ has degree~1. We regard $\bA$ as an object of $\Rep(S_{\star})$ by letting $S_n$ act trivially on $\bA_n=\bk$. In this way, $\bA$ is a commutative algebra object of $\Rep(S_{\star})$.  By an $\bA$-module, we will always mean a module object for $\bA$ in $\Rep(S_{\star})$. We write $\Mod_{\bA}$ for the category of $\bA$-modules. As shown in \cite[Proposition 7.2.5]{catgb}, the categories $\Mod_{\bA}$ and $\Mod_{\FI}$ are equivalent. We pass freely between the two points of view. We regard $\bk$ as an $\bA$-module in the obvious way ($t$ acts by~0). We denote by $\Tor_i(-)$ the $i$th left derived functor of $\bk \otimes_{\bA} -$ on the category of $\bA$-modules. One easily sees that this definition coincides with the one from the introduction.

There is essentially only one $\Tor$ computation that we will use, namely $\Tor_{\bullet}(\bk)$. Let $\sgn_n$ be the sign representation of $S_n$, which we regard as an object of $\Rep(S_{\star})$ supported in degree $n$.  There is an inclusion of $\bk$-modules $\sgn_1 \to \bA$. We can consider the resulting Koszul complex $\lw^{\bullet}(\sgn_1) \otimes \bA$ in the category $\Rep(S_{\star})$. We now describe this complex explicitly. As a $\bk$-module, $\bigotimes^n (\sgn_1)$ is freely spanned by the generators $i_1 \otimes \cdots \otimes i_n$, one for each permutation $(i_1, \ldots, i_n)$  of the first $n$ natural numbers. From this, one sees that $\lw^n(\sgn_1)=\sgn_n$. Thus the $\bk$-module \[(\lw^{k}(\sgn_1) \otimes \bA)_n = \Ind_{S_k \times S_{n-k}}^{S_n} (\sgn_k \otimes \bA_{n-k})\]  is freely spanned by $i_1 \wedge \cdots \wedge i_k$ where $\{i_1, \ldots, i_k\}$ is a subset of size $k$ of $[n]$. The differential is the usual alternating sum
\[
  i_1 \wedge \cdots \wedge i_k \mapsto \sum_{j=1}^k (-1)^j i_1 \wedge\cdots \widehat{\imath_j} \cdots \wedge i_k
\]
where $\widehat{\imath_j}$ means that we omit that term. So in degree $n$, the Koszul complex is the usual complex that calculates the reduced homology of the standard $(n-1)$-simplex. It is well known that the standard $(n-1)$-simplex has no nontrivial reduced homology for $n>0$. This implies that this complex is exact in degrees $>0$, and that its $0$th homology is just $\bk$. We thus have a resolution $\bA \otimes \sgn_{\bullet} \to \bk$. Since $\bA \otimes \sgn_p$ is acyclic with respect to the functor $\bk \otimes_{\bA} -$ (also see Theorem~\ref{thm:tor-vanishing}), we see that $\Tor_p(M)$ is equal to the Koszul homology of $M$. 

\begin{proposition} \label{prop:tor-koszul}
If $T$ is an $\bA/\bA_+$-module, then $\Tor_p(T) = T \otimes \sgn_p$.
\end{proposition}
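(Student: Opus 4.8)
The plan is to reduce the computation of $\Tor_\bullet(T)$ for an arbitrary $\bA/\bA_+$-module $T$ to the single computation $\Tor_\bullet(\bk)$ that was just carried out via the Koszul complex. Since $T$ is an $\bA/\bA_+$-module, the action of $t$ on $T$ is zero, so $T$ is really just an object of $\Rep(S_\star)$ with trivial $\bA$-module structure; equivalently, $T$ is a direct sum (over $n$) of the $\bk[S_n]$-modules $T_n$, each regarded as supported in a single degree.

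First I would observe that $\bk \otimes_\bA -$ and its derived functors behave well with respect to the base change $\bA \to \bk$. Concretely, I would use the standard change-of-rings (or projection) identity: for an $\bA/\bA_+$-module $T$, we have $\Tor^\bA_\bullet(\bk, T) \cong \Tor^\bA_\bullet(\bk, \bk) \otimes_\bk T$ as graded $\bk$-modules (with the $S_\star$-structure coming from the symmetric monoidal structure on $\Rep(S_\star)$). The cleanest way to see this: resolve $\bk$ by the minimal free resolution $\bA \otimes \sgn_\bullet \to \bk$ constructed above, apply $- \otimes_\bA T$, and note that since $\bA$ acts trivially on $T$ we get $(\bA \otimes \sgn_p) \otimes_\bA T = \sgn_p \otimes T$, and moreover \emph{all differentials in the complex $\sgn_\bullet \otimes T$ vanish}. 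Indeed, the differentials in $\bA \otimes \sgn_\bullet$ are, by construction, linear in $t$ (they are built from the map $\sgn_1 \to \bA$, i.e.\ multiplication by $t$), so after tensoring down along $\bA \to \bA/\bA_+$ — which is exactly what $- \otimes_\bA T$ does since $t$ kills $T$ — every differential becomes zero. Therefore the complex computing $\Tor_\bullet(T)$ has zero differentials, and $\Tor_p(T) = \sgn_p \otimes T$, which is $T \otimes \sgn_p$ by the symmetry of $\otimes$.

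An alternative, essentially equivalent, route is to use the spectral sequence or the associativity of derived tensor product: $\bk \otimes^{\rL}_\bA T = \bk \otimes^{\rL}_\bA (\bA/\bA_+ \otimes^{\rL}_{\bA/\bA_+} T)$. Since $T$ is already an $\bA/\bA_+$-module and $\bA/\bA_+ = \bk$ here, one gets $\bk \otimes^{\rL}_\bA \bk \otimes^{\rL}_\bk T$, and as $T$ is flat over $\bk$ in each degree... — but this requires a flatness hypothesis that isn't present, so I would prefer to avoid it and stick with the explicit Koszul resolution argument, which needs no flatness: the statement $\Tor_p(T) = T \otimes \sgn_p$ holds for \emph{every} $\bA/\bA_+$-module $T$ precisely because the minimal resolution of $\bk$ has differentials that are strictly $t$-linear.

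The only real point requiring care — and thus the main (minor) obstacle — is justifying that $\bA \otimes \sgn_\bullet$ is a legitimate projective (or at least $\Tor$-acyclic) resolution of $\bk$ \emph{in the category $\Mod_\bA$} so that it may be used to compute $\Tor^\bA_\bullet(\bk, T)$ for an arbitrary module $T$ on the $\bk$ side, not just $\bk$ itself; this is already implicit in the discussion preceding the proposition (the complex is exact in positive degrees with $\rH_0 = \bk$, and each $\bA \otimes \sgn_n$ is a free, hence projective, $\bA$-module), so the argument goes through. After that, tracking the trivial $\bA$-action through the tensor product and confirming the differentials vanish is a routine unwinding of definitions.
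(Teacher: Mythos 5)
Your proof is correct and is essentially the paper's argument: tensor the Koszul resolution $\bA \otimes \sgn_{\bullet} \to \bk$ with $T$ over $\bA$ and observe that, because the differentials are built from multiplication by $t$ and $\bA_+$ kills $T$, the resulting complex $T \otimes \sgn_{\bullet}$ has zero differentials, giving $\Tor_p(T) = T \otimes \sgn_p$. The extra care you take (that the terms $\bA \otimes \sgn_n$ are $\bA$-flat, so the resolution may be used to compute $\Tor$ against any $T$) is a reasonable point that the paper leaves implicit.
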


\begin{proof}
By the paragraph above, we have
\[
\Tor_p(T) =  \rH_p(T \otimes_{\bA} (\bA \otimes \sgn_{\bullet}) ) = \rH_p(T \otimes \sgn_{\bullet}).
\]
Since  $\bA_{+} T = 0$, we see that all the differentials in $T \otimes \sgn_{\bullet}$ vanish. This shows that $\rH_p(T \otimes \sgn_{\bullet}) = T \otimes \sgn_p$, completing the proof.  
\end{proof}

The restriction functor from $\Mod_{\FI}$ to $\Rep(S_{\star})$ admits a left adjoint denoted  $\cI$. We call $\FI$-modules of the form $\cI(V)$ {\bf induced} $\FI$-modules. In terms of $\bA$-modules, we have $\cI(V)=\bA \otimes V$. For a representation $V$ of $S_d$ we have 
\begin{displaymath}
\cI(V)_n = \bk[\Hom_{\FI}([d],[n])]\otimes_{\bk[S_d]} V = V \otimes \bA_{n-d}.
\end{displaymath}
See \cite[Definition~2.2.2]{fimodules} for more details on $\cI(V)$; note that there the notation $M(V)$ is used in place of $\cI(V)$. We say that an $\FI$-module $M$ is {\bf semi-induced} if it has a finite length filtration where the quotients are induced. (Semi-induced modules have also been called $\sharp$-filtered modules in the literature.) In characteristic~$0$, induced modules are projective, and so semi-induced modules are induced.

In the introduction, we defined $\rH^0_{\fm}(M)$ to be the maximal torsion submodule of an $\FI$-module $M$. We now introduce $\Gamma_{\fm}$ as a synonym for $\rH^0_{\fm}$, as it is better suited to the derived functor notation $\rR \Gamma_{\fm}$. Note that $\rR^i \Gamma_{\fm}$ is exactly the same as $\rH^i_{\fm}$.

\begin{theorem} \label{thm:tor-vanishing}
Let $M$ be a finitely generated $\FI$-module. Then the following are equivalent: \begin{enumerate}[\indent \rm (a)]
\item $M$ is semi-induced.
\item $\rR \Gamma_{\fm}(M) = 0$.
\item $\Tor_i(M) = 0$ for $i >0$.
\end{enumerate}
\end{theorem}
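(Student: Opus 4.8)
The plan is to prove the three implications $(a) \Rightarrow (c) \Rightarrow (b) \Rightarrow (a)$, since $(a) \Rightarrow (c)$ is the most direct and $(b) \Rightarrow (a)$ should be the genuine content.

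\emph{Step 1: $(a) \Rightarrow (c)$.} Since $\Tor_\bullet$ is computed by $-\otimes_{\bA}\bk$ and is thus additive in exact sequences via the long exact sequence, it suffices to treat the case where $M$ itself is induced, $M = \cI(V) = \bA \otimes V$. Then $\bA \otimes V$ is obtained from the free module $\bA$ (well, $\bA\otimes \bk[S_d]$-type pieces) by base change, and concretely $\Tor_i(\bA \otimes V) = \Tor_i(\bA) \otimes V$. But $\bA$ is free as an $\bA$-module, so $\Tor_i(\bA) = 0$ for $i > 0$. Hence $\Tor_i(\cI(V)) = 0$ for $i>0$, and the filtration plus long exact sequences give the same for any semi-induced $M$.

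\emph{Step 2: $(c) \Rightarrow (b)$.} Here I would use that $\rR\Gamma_{\fm}$ can be detected after tensoring down. The key observation is that for a torsion $\FI$-module $T$ (equivalently an $\bA/\bA_+$-module, at least up to filtration by such), Proposition~\ref{prop:tor-koszul} gives $\Tor_p(T) = T \otimes \sgn_p \neq 0$ whenever $T \neq 0$. So nonvanishing torsion forces nonvanishing higher $\Tor$. To leverage this for $\rR\Gamma_{\fm}$ rather than $\Gamma_{\fm}$ itself, I would induct on the largest $i$ with $\rR^i\Gamma_{\fm}(M) \neq 0$: if $\Gamma_{\fm}(M) \neq 0$ it is a nonzero torsion submodule, and comparing $\Tor$ of this submodule against $\Tor$ of $M/\Gamma_{\fm}(M)$ (which still has $\Tor_i = 0$ for $i>0$ if $M$ does, since $\Gamma_{\fm}(M)$ is torsion hence has higher $\Tor$ concentrated in a controlled way) yields a contradiction; and $\rR^{>0}\Gamma_{\fm}(M)$ is handled by passing to $M/\Gamma_{\fm}(M)$, which is torsion-free with $\rR^i\Gamma_{\fm}(M/\Gamma_{\fm}(M)) = \rR^i\Gamma_{\fm}(M)$ for $i>0$, and then embedding it into an injective and dimension-shifting.

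\emph{Step 3: $(b) \Rightarrow (a)$, the main obstacle.} This is where the real work lies. Assuming $\rR\Gamma_{\fm}(M) = 0$, I want to produce a filtration of $M$ with induced quotients. The natural strategy: choose a surjection $\cI(V) \twoheadrightarrow \Tor_0(M)$ realizing generators of $M$, lift to a map $\cI(V) \to M$, and show it is injective with semi-induced cokernel, then induct. Injectivity and the cokernel's regularity need control — this is exactly the place where one expects to invoke a shift/saturation argument: $\rR\Gamma_{\fm}(M) = 0$ means $M$ agrees with a "saturation" built from its values in large degree, and the Koszul resolution $\bA \otimes \sgn_\bullet \to \bk$ lets one compare $\Tor$ and local cohomology via a spectral sequence (a local duality for $\bA$). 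Concretely, I would set up the spectral sequence relating $\Tor_\bullet(M)$ and $\rR\Gamma_{\fm}(M)$ — whose $E_2$ page involves $\Tor_p$ of the local cohomology modules — and read off that $\rR\Gamma_{\fm}(M) = 0$ forces the minimal free complex of $M$ to have length $0$, i.e. $M$ is semi-induced. The main obstacle is establishing this duality spectral sequence with the correct indexing and convergence, and checking that "length $0$ minimal complex" genuinely means semi-induced rather than merely that $\Tor_{>0}$ vanishes (which is the content already granted by $(c)$); one needs the structure theorem (Theorem~\ref{thm:semires}) to close this gap. I expect the cleanest route is to cite that structure theorem directly and reduce $(b)\Rightarrow(c)$ and $(c)\Rightarrow(a)$ to it, with Step 3 really being an application of the semi-orthogonal decomposition of $\mathrm{D}^b(\Mod_{\FI})$ into torsion and semi-induced parts.
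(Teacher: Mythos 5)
The paper does not reprove this theorem: it simply cites \cite[Proposition~5.12]{li-ramos} for (a) $\Leftrightarrow$ (b) and \cite[Theorem~B]{ramos} for (a) $\Leftrightarrow$ (c). Your Step 1 ((a) $\Rightarrow$ (c)) is fine: the Koszul complex $\bA \otimes \sgn_{\bullet}$ is a complex of $\bA$-flat modules and is contractible in positive degrees in each fixed grading, so induced (hence semi-induced) modules have vanishing higher $\Tor$. The problems are in Steps 2 and 3, which together are supposed to carry the real content but in fact presuppose it.

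In Step 2, the asserted contradiction is never actually produced. The parenthetical claim that $M/\Gamma_{\fm}(M)$ ``still has $\Tor_i=0$ for $i>0$'' is unjustified (the long exact sequence only gives $\Tor_{i+1}(M/\Gamma_{\fm}(M)) \cong \Tor_i(\Gamma_{\fm}(M))$ for $i\ge 1$ when $\Tor_{>0}(M)=0$, which is consistent and yields no contradiction by itself). To get a contradiction you need a genuine nonvanishing statement, e.g.\ that a nonzero finitely generated torsion module $T$ has $\Tor_n(T)$ nonzero in degree $n+\maxdeg(T)$ for $n \gg 0$; but a general torsion module is only filtered by $\bA/\bA_+$-modules, and the top-degree term in the Koszul complex receives a possibly surjective differential, so Proposition~\ref{prop:tor-koszul} alone does not settle this. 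In positive characteristic this nonvanishing is exactly what the paper's $\nu$-invariant machinery (Lemma~\ref{lem:torsion-reg}) is built to prove, and it is not available to you at this point. In Step 3, deferring to Theorem~\ref{thm:semires} does not close the gap for two reasons. First, to conclude from $\rR\Gamma_{\fm}(M)=0$ that the torsion piece $T$ in the triangle $T \to M \to F$ is acyclic, you must already know $\rR\Gamma_{\fm}(F)=0$ for semi-induced complexes, i.e.\ the implication (a) $\Rightarrow$ (b); this is precisely the content of the Li--Ramos result (and of Proposition~\ref{prop:loccoh}, whose proof in the paper invokes the very theorem you are proving), so your cycle (a) $\Rightarrow$ (c) $\Rightarrow$ (b) $\Rightarrow$ (a) does not supply it. Second, even granting $M \simeq F$ in the derived category, knowing that $M$ is quasi-isomorphic to a bounded complex of semi-induced modules does not immediately give that the module $M$ is semi-induced: one needs, say, that the kernel of a surjection between semi-induced modules is semi-induced, or the criterion (c) $\Rightarrow$ (a) --- and that is the nontrivial content of Ramos's Theorem~B, which you have not proved independently. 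As written, the proposal is a plausible outline that reduces the theorem to the two cited results rather than a proof of them.
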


\begin{proof}
The equivalence (a) $\Longleftrightarrow$ (b) is proven in \cite[Proposition~5.12]{li-ramos}, and the equivalence (a) $\Longleftrightarrow$ (c) is established in \cite[Theorem~B]{ramos} (and independently in \cite[Theorem~1.3]{liyu}).
\end{proof}

\begin{lemma} \label{lem:fg-homology1}
Suppose $\cA$ is a locally noetherian abelian category. Let $M$ be a bounded chain complex in $\cA$ with finitely generated homologies. Then there exists a bounded complex $N$ with finitely generated terms which is quasi-isomorphic to $M$. Moreover, we may also assume that $N_i$ is a subobject of $M_i$ for each $i$.
\end{lemma}

\begin{proof} We proceed by induction on the length of the complex. The base case when $M$ is supported in at most one homological degree is trivial. Now suppose the length of $M$ is at least 2. By shifting, we may assume that the smallest $i$ such that $M_{i}$ is non-zero is 0. Let $N_{0}$ be a finitely generated subobject of $M_{0}$ such that $N_{0}/(d_{1}(M_1) \cap N_0) \cong \rH_0(M)$. Let $M'$ be the complex supported in positive degrees such that $M'_{i} = M_i$ if $i >1$, $M'_{1} = d_1^{-1}(d_1(M_1) \cap N_0)$, and $M'_0 = 0$. By induction on length, there exists a complex $N'$ with finitely generated terms which is quasi-isomorphic to $M'$ such that $N'_{i}$ is a subobject of $M'_i$ for each $i$. Since the map $d_1 \colon N_1' \to M_0$ factors through $\rH_1(N') = \rH_1(M') = d_1^{-1}(d_1(M_1) \cap N_0)/(d_2(M_2))$, we see that $d_1(N'_1) = N_0 \cap d_1(M_1)$. Set $N_i = N_i'$ for $i>1$. Then $N$ has all of the required properties. 
\end{proof}

\begin{lemma} \label{lem:fg-homology}
Let $M$ be a bounded complex of $\FI$-modules. Suppose all cohomology groups are finitely generated torsion $\FI$-modules. Then $M$ is quasi-isomorphic to a bounded complex of finitely generated torsion $\FI$-modules.
\end{lemma}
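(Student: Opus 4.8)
The plan is to proceed by induction on the number of nonzero cohomology groups of the complex $M$, peeling off the topmost (or bottommost) one using a truncation.

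First I would recall the standard truncation functors from homological algebra: for a bounded complex $M$, if $j$ is the largest index with $\rH^j(M) \neq 0$, there is a distinguished triangle $\tau_{\le j-1} M \to M \to \rH^j(M)[-j] \to$ in the derived category, where $\rH^j(M)[-j]$ denotes the cohomology group placed in degree $j$. By hypothesis $\rH^j(M)$ is a finitely generated torsion $\FI$-module, so $\rH^j(M)[-j]$ is already (quasi-isomorphic to) a bounded complex of finitely generated torsion modules. Meanwhile $\tau_{\le j-1} M$ is a bounded complex whose cohomology groups are $\rH^i(M)$ for $i \le j-1$ and $0$ otherwise; these are still finitely generated torsion. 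Since $\tau_{\le j-1}M$ has strictly fewer nonzero cohomology groups than $M$, by induction it is quasi-isomorphic to a bounded complex $N^\bullet$ of finitely generated torsion $\FI$-modules. The base case, where $M$ has a single nonzero cohomology group $\rH^j(M)$, is immediate: $M$ is quasi-isomorphic to $\rH^j(M)[-j]$.

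To finish, I would splice the triangle together: from $N^\bullet \simeq \tau_{\le j-1}M$ and the connecting map $\tau_{\le j-1}M \to \rH^j(M)[-j]$, realized as an honest chain map $N^\bullet \to \rH^j(M)[-j]$ after replacing things by suitable resolutions, the mapping cone (shifted) is a bounded complex of finitely generated torsion $\FI$-modules quasi-isomorphic to $M$. The only point requiring a little care is that the category of finitely generated torsion $\FI$-modules is abelian and closed under the relevant operations: it is closed under subobjects, quotients, and extensions inside $\Mod_{\FI}$ (torsion is clearly preserved, and finite generation is preserved because $\bk$ is noetherian and $\Mod_{\FI}$ is locally noetherian over a noetherian ring, so $\FI$ has the noetherian property). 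This guarantees that the cone, as well as the truncations, stay within the subcategory.

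The main obstacle is essentially bookkeeping rather than a genuine mathematical difficulty: one must verify that the abstract triangle can be represented by actual chain maps between bounded complexes of finitely generated torsion modules so that the mapping-cone construction stays in the world of honest (not just derived) bounded complexes. This can be handled either by working with a chain-level truncation from the outset — take $N^\bullet$ to be the naive truncation $(\cdots \to M^{j-1} \to \ker(d^j \colon M^j \to M^{j+1}) \to 0)$ but with the module in degree $j$ replaced appropriately so cohomology matches — or, more cleanly, by invoking that the bounded derived category of finitely generated torsion $\FI$-modules embeds as the full subcategory of $\rD^b(\Mod_{\FI})$ of complexes with finitely generated torsion cohomology (a consequence of the subcategory being a Serre subcategory of the noetherian category $\Mod_{\FI}$), which gives the statement formally.
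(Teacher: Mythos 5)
Your outline via cohomological truncations is reasonable, but the step you dismiss as bookkeeping is where the actual content lies, and as written there is a gap. To splice $N^\bullet \simeq \tau_{\le j-1}M$ with $\rH^j(M)[-j]$ you must represent the connecting morphism --- a map in the derived category from a module concentrated in a single cohomological degree to a shift of $N^\bullet$ --- by an honest chain map \emph{without leaving} the subcategory of finitely generated torsion modules. The natural ways to rigidify such a morphism (resolve the source by projectives, or use an arbitrary roof) immediately exit that subcategory: projective $\FI$-modules are induced and never torsion, so ``replacing things by suitable resolutions'' is precisely the unproved point. Your fallback claim, that $D^b(\text{f.g.\ torsion}) \to D^b_{\text{f.g.\ torsion}}(\Mod_{\FI})$ is an equivalence ``as a consequence of the subcategory being a Serre subcategory,'' is not a theorem: for a general Serre subcategory $\mathcal{B} \subset \mathcal{A}$ this comparison functor need not even be essentially surjective, and the standard criteria demand an extra (co)resolution property, e.g.\ that any injection $B \to A$ with $B \in \mathcal{B}$ can be followed by a map $A \to B'$ with $B' \in \mathcal{B}$ and $B \to B'$ still injective. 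Verifying such a property for torsion $\FI$-modules is exactly the $\FI$-specific input you have not supplied; it is also missing from your first proposed fix, since the naive truncation $(\cdots \to M^{j-1} \to \ker d^j \to 0)$ has terms $M^i$ that are in general neither finitely generated nor torsion.

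The missing ingredient is truncation in the \emph{internal} ($\FI$-)degree, which is how the paper argues: since $\bk$ is noetherian and the cohomology is finitely generated, one first replaces $M$ by a quasi-isomorphic bounded complex with finitely generated terms (inductively lifting generators of cohomology); then, choosing $n$ so that all cohomology lives in $\FI$-degrees $\le n$, one applies the exact functor $N \mapsto N^{\le n}$ that kills all graded pieces in degrees $>n$. The natural surjection $M \to M^{\le n}$ is then a quasi-isomorphism, and $M^{\le n}$ is a bounded complex of finitely generated torsion modules, which finishes the proof in one stroke. This same functor is what would verify the coresolution property needed to make your formal argument run --- but once you have it, the direct two-step argument is shorter than the induction over cohomology groups.
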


\begin{proof}
For an $\FI$-module $N$, let $N^{\le n}$ be the natural $\FI$-module defined by
\begin{displaymath}
(N^{\le n})_k = \begin{cases} N_k & \text{if $k \le n$} \\
0 & \text{if $k>n$} \end{cases}.
\end{displaymath}
It is clear that the functor $(-)^{\le n}$ is exact. We note that there is a natural surjection $N \to N^{\le n}$.

Over a noetherian ring, the category of $\FI$-modules is locally noetherian \cite[Theorem~A]{fi-noeth}. Thus by the previous lemma, we may assume that the terms of $M$ are finitely generated. Let $n$ be large enough so that that all cohomology groups of $M$ are supported in degrees $\le n$. Then $M \to M^{\le n}$ is a quasi-isomorphism, and $M^{\le n}$ is a bounded complex of torsion modules. Finally, apply Lemma~\ref{lem:fg-homology1} to $M^{\le n}$ to conclude that $M$ is quasi-isomorphic to a bounded complex of finitely generated torsion modules.
\end{proof}

\begin{theorem}[Structure theorem for $\FI$-modules] \label{thm:semires}
Let $M$ be a finitely generated $\FI$-module over a noetherian ring $\bk$. Then, in the derived category of $\FI$-modules, there is an exact triangle $T \to M \to F \to$ such that 
\begin{enumerate}[\indent \rm (a)]
\item $T$ is a bounded complex of finitely generated torsion modules supported in nonnegative degrees.
\item $F$ is a bounded complex of finitely generated semi-induced modules supported in nonnegative degrees.
\end{enumerate}
\end{theorem}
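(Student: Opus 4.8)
The plan is to take $T$ to be the derived torsion part of $M$ and $F$ the cone of the canonical map $T \to M$, so that part (a) is immediate and all the content lies in identifying $F$ as a semi-induced complex. Concretely, I would set $T = \rR\Gamma_{\fm}(M)$. By Proposition~\ref{prop:loccoh} this is a bounded complex whose cohomology groups $\rH^i_{\fm}(M)$ are finitely generated torsion $\FI$-modules, which (like every $\FI$-module) are supported in nonnegative degrees; so by Lemma~\ref{lem:fg-homology} it is represented by a bounded complex of finitely generated torsion modules supported in nonnegative degrees, which gives (a). The derived counit $\rR\Gamma_{\fm}(M) \to M$ of the adjunction between the inclusion of the torsion subcategory and $\Gamma_{\fm}$ then yields an exact triangle $T \to M \to F \to$, and from the long exact cohomology sequence $F$ is a bounded complex with finitely generated cohomology, supported in nonnegative degrees.

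The next step is to show $\rR\Gamma_{\fm}(F) = 0$. The key point is that finitely generated torsion modules are $\Gamma_{\fm}$-acyclic: passing to $\bA$-modules with $\bA = \bk[t]$, the functor $\rR\Gamma_{\fm}$ is computed by the two-term complex $N \mapsto \bigl[\, N \to N[t^{-1}] \,\bigr]$ (localization at $t$ being exact), so $\rR\Gamma_{\fm}$ has cohomological amplitude at most one, and $N[t^{-1}] = 0$ for torsion $N$ gives $\rR\Gamma_{\fm}(N) = \Gamma_{\fm}(N) = N$. Applying this termwise to the bounded torsion complex $T$ shows $\rR\Gamma_{\fm}(T) = T$ and that the natural map $\rR\Gamma_{\fm}(T) \to \rR\Gamma_{\fm}(M)$ is an isomorphism; feeding the triangle $T \to M \to F \to$ through $\rR\Gamma_{\fm}$ then forces $\rR\Gamma_{\fm}(F) = 0$.

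The heart of the proof is then a complex-level strengthening of Theorem~\ref{thm:tor-vanishing}: a bounded complex $F$ with finitely generated cohomology and $\rR\Gamma_{\fm}(F) = 0$ is formal, $F \simeq \bigoplus_i \rH^i(F)[-i]$, and each $\rH^i(F)$ is semi-induced. I would prove this by induction on the number of nonzero cohomology groups, the case of a single group being exactly Theorem~\ref{thm:tor-vanishing}. For the inductive step, with $a$ the least degree for which $\rH^a(F) \ne 0$, apply $\rR\Gamma_{\fm}$ to the truncation triangle $\rH^a(F)[-a] \to F \to \tau_{\ge a+1}F \to$; using $\rR\Gamma_{\fm}(F) = 0$ this gives $\rR\Gamma_{\fm}(\tau_{\ge a+1}F) \simeq \rR\Gamma_{\fm}(\rH^a(F))[1-a]$, and the amplitude bound of the previous step puts the left-hand side in cohomological degrees $\ge a+1$ and the right-hand side in cohomological degrees $\le a$, so both vanish. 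Hence $\rH^a(F)$ is semi-induced by Theorem~\ref{thm:tor-vanishing}, the complex $\tau_{\ge a+1}F$ satisfies the same hypotheses with fewer cohomology groups, and the connecting map $\tau_{\ge a+1}F \to \rH^a(F)[1-a]$ vanishes for degree reasons (source with cohomology in degrees $\ge a+1$, target concentrated in degree $a-1$), so $F$ splits off its bottom cohomology and the induction closes. Applying this to the $F$ of the first paragraph gives (b). I expect this last step to be the main obstacle: the passage from the module statement of Theorem~\ref{thm:tor-vanishing} to its complex version genuinely needs the fact that $\rR\Gamma_{\fm}$ has cohomological amplitude at most one, in order to see that the truncation pieces again have vanishing derived torsion and that the connecting maps die. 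Everything else is formal manipulation of the local-cohomology triangle, with the essential input — the equivalence ``semi-induced $\Leftrightarrow$ no derived torsion'' for modules — supplied by Theorem~\ref{thm:tor-vanishing}.
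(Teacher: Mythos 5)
Your overall strategy---take $T=\rR\Gamma_{\fm}(M)$, let $F$ be the cone, and then argue that $F$ is forced to be semi-induced by purely formal manipulations with Theorem~\ref{thm:tor-vanishing}---cannot be repaired, because two of its key assertions are false for $\FI$-modules. First, $\rR\Gamma_{\fm}$ is not computed by a two-term complex $N \to N[t^{-1}]$ and does not have cohomological amplitude at most one. Here $t$ lies in $\bA_1$, and multiplication by $t$ is not an endomorphism of an $\FI$-module inside the category (the map $M_n \to M_{n+1}$ is only $S_n$-equivariant), so the localization $N[t^{-1}]$ does not even make sense as written, and no one-step \v{C}ech-type computation is available. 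Concretely, let $P=\cI(\bk)$ be the module induced from the trivial representation of $S_1$ (so $P_n=\bk^n$ is the permutation representation), and let $K\subset P$ be the kernel of the sum map $P\to\bA$; it is finitely generated (by $e_1-e_2$ in degree $2$). From the exact sequences $0\to K\to P\to \bA_+\to 0$ and $0\to \bA_+\to\bA\to\bA/\bA_+\to 0$, together with $\rR\Gamma_{\fm}(P)=\rR\Gamma_{\fm}(\bA)=0$ (Theorem~\ref{thm:tor-vanishing}), one finds $\rR\Gamma_{\fm}(K)\simeq(\bA/\bA_+)[-2]$, so $\rH^2_{\fm}(K)\ne 0$ while $\rH^0_{\fm}(K)=\rH^1_{\fm}(K)=0$. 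This destroys the amplitude bound you invoke both to place $\rR\Gamma_{\fm}(\rH^a(F))[1-a]$ in degrees $\le a$ and to kill the truncation pieces. (The statement $\rR\Gamma_{\fm}(N)=N$ for torsion $N$ is nonetheless true, but needs a different justification than the one you give.)

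More seriously, your ``complex-level strengthening'' of Theorem~\ref{thm:tor-vanishing} is simply false, so the heart of the argument collapses rather than merely lacking a proof. With $K$ as above, the cone $F$ of $\rR\Gamma_{\fm}(K)\to K$ satisfies $\rR\Gamma_{\fm}(F)=0$ and has $\rH^0(F)=K$ and $\rH^1(F)=\bA/\bA_+$; the latter is a nonzero torsion module, hence not semi-induced, and $K$ is not semi-induced either since $\rR\Gamma_{\fm}(K)\ne 0$. So a bounded complex with finitely generated cohomology and vanishing derived torsion need not be formal and need not have semi-induced cohomology modules: Theorem~\ref{thm:semires} only asserts that the \emph{terms} of $F$ can be taken semi-induced, and producing such a complex is exactly the nontrivial content, which the paper obtains by citing \cite[Theorem~A]{nagpal} (high shifts of a finitely generated $\FI$-module are semi-induced) together with Lemma~\ref{lem:fg-homology} to replace $T$ by an honest bounded complex of finitely generated torsion modules. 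Finally, note a circularity in your first step: you appeal to Proposition~\ref{prop:loccoh} to know that $\rR\Gamma_{\fm}(M)$ is bounded with finitely generated torsion cohomology, but in the paper that proposition is deduced from Theorem~\ref{thm:semires}, so an independent source for this finiteness would also be needed.
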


In characteristic~$0$, this theorem was proved in \cite{symc1}.

\begin{proof}
  In \cite[Theorem~A, part (B)]{nagpal},  a complex $F$ and a map $M \to F^0$ is constructed such that $F$ satisfies the condition in (b) and the augmented complex $M \to F^0 \to  F^1 \to \cdots$ is exact in high enough degrees (in the reference the terminology ``$\sharp$-filtered'' is used for semi-induced modules).  Since $M$ is supported in cohomological degree 0, we see that the augmented complex $M \to F^0 \to  F^1 \to \cdots$ is the mapping cone of the natural map  $M \to F$ of complexes. Since $\mathrm{Cone}(M \to F)$ is exact in high enough degrees, there exists a quasi-isomorphic complex $T$ satisfying the condition in (a) by Lemma~\ref{lem:fg-homology}. The theorem now follows as $\mathrm{Cone}(M \to F) \to M \to F \to$ is an exact triangle.
\end{proof}

The following corollary is a weaker version of \cite[Theorem~A]{castelnuovo-regularity}.

\begin{corollary}
\label{cor:finite-regularity}
A finitely generated $\FI$-module has finite regularity.
\end{corollary}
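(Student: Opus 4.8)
The plan is to derive Corollary~\ref{cor:finite-regularity} directly from the structure theorem (Theorem~\ref{thm:semires}) together with the vanishing criterion of Theorem~\ref{thm:tor-vanishing}. The key point is that regularity, being defined through the $\Tor$-functors, behaves well with respect to exact triangles, and that both of the ``pieces'' $T$ and $F$ appearing in the structure triangle have manifestly finite regularity.

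First I would make precise the (standard) statement that regularity is sub-additive on exact triangles: if $A \to B \to C \to$ is an exact triangle in the derived category of $\FI$-modules, then the long exact sequence in $\Tor$ gives $t_i(B) \le \max(t_i(A), t_i(C))$ for each $i$ (with the obvious convention for complexes, taking $t_i$ of a complex to mean the top degree appearing in the $i$-th hyper-$\Tor$, equivalently shifting indices by the homological degrees of the terms), hence $\reg(B) \le \max(\reg(A), \reg(C))$. Applying this to the triangle $T \to M \to F \to$ from Theorem~\ref{thm:semires}, it suffices to bound $\reg(T)$ and $\reg(F)$ separately.

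For $F$: since $F$ is a bounded complex of finitely generated semi-induced modules supported in nonnegative degrees, Theorem~\ref{thm:tor-vanishing} gives $\Tor_i(F_j) = 0$ for all $i > 0$ and all terms $F_j$, so only $\Tor_0$ contributes; a bounded complex supported in finitely many (bounded) degrees then has $t_i(F)$ finite for each $i$ and equal to $-\infty$ outside a finite range, whence $\reg(F) < \infty$. For $T$: since $T$ is a bounded complex of finitely generated torsion modules, it suffices to show a single finitely generated torsion $\FI$-module $N$ has finite regularity, and then invoke sub-additivity across the finitely many terms of $T$ (with appropriate shifts). A finitely generated torsion module is supported in finitely many degrees, so it is in particular finitely generated over $\bA/\bA_+^k$ for some $k$; one reduces by a further short dévissage (filtering $N$ by the submodules annihilated by successive powers of $\bA_+$) to the case where $N$ is an $\bA/\bA_+$-module, i.e.\ lives in $\Rep(S_\star)$, and then Proposition~\ref{prop:tor-koszul} computes $\Tor_p(N) = N \otimes \sgn_p$, which is supported in degrees $\le \maxdeg(N) + p$, giving $\reg(N) \le \maxdeg(N) < \infty$.

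I expect the main (though still routine) obstacle to be bookkeeping the degree shifts when passing from modules to bounded complexes: one has to be careful that ``regularity of a complex'' is defined via hyper-$\Tor$ with the homological degree of each term folded in, and that the sub-additivity inequality $t_i(B) \le \max(t_i(A), t_i(C))$ really does follow termwise from the hyper-$\Tor$ spectral sequence (or long exact sequence) even though there may be cancellation — we only need an upper bound, so cancellation is harmless. A secondary point requiring a line of justification is the dévissage for torsion modules: that a finitely generated torsion $\FI$-module admits a finite filtration whose subquotients are $\bA/\bA_+$-modules, which follows since such a module is supported in a bounded range of degrees and is annihilated by $\bA_+^k$ for $k$ large.
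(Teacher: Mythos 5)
Your proposal is correct and follows essentially the same route as the paper: the structure theorem plus a dévissage reducing to (semi-)induced modules, whose higher $\Tor$ groups vanish, and to $\bA/\bA_+$-modules, handled by Proposition~\ref{prop:tor-koszul}. Your write-up just makes explicit the bookkeeping (sub-additivity over triangles and the filtration by kernels of powers of $\bA_+$) that the paper leaves to the reader.
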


\begin{proof}
Using Theorem~\ref{thm:semires} and a d\'evissage argument, one is reduced to the case of induced $\FI$-modules, which obviously have finite regularity, and $\bA/\bA_+$-modules, which have finite regularity by Proposition~\ref{prop:tor-koszul}.
\end{proof}

\begin{proposition} \label{prop:loccoh}
Let $M$ be a finitely generated $\FI$-module, and let $T \to M \to F \to$ be the triangle in Theorem~\ref{thm:semires}. Then $\rH^i_{\fm}(M)=\rH^i(T)$. In particular, $\rH^i_{\fm}(M)$  is finitely generated for all $i$ and vanishes for $i \gg 0$.
\end{proposition}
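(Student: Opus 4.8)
The plan is to use the exact triangle $T \to M \to F \to$ from Theorem~\ref{thm:semires} together with the fact that $\rR\Gamma_{\fm}$ kills semi-induced complexes. Applying the functor $\rR\Gamma_{\fm}$ to the triangle yields an exact triangle $\rR\Gamma_{\fm}(T) \to \rR\Gamma_{\fm}(M) \to \rR\Gamma_{\fm}(F) \to$ in the derived category. Since $F$ is a bounded complex of finitely generated semi-induced modules, Theorem~\ref{thm:tor-vanishing} (equivalence (a) $\Longleftrightarrow$ (b)) gives $\rR\Gamma_{\fm}(F_j) = 0$ for each term $F_j$, and hence, since $\rR\Gamma_{\fm}$ commutes with the (finite) filtration by the stupid truncation, $\rR\Gamma_{\fm}(F) = 0$. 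Therefore the map $\rR\Gamma_{\fm}(T) \to \rR\Gamma_{\fm}(M)$ is an isomorphism in the derived category, so $\rH^i_{\fm}(M) = \rR^i\Gamma_{\fm}(T)$ for all $i$.

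The remaining point is to identify $\rR^i\Gamma_{\fm}(T)$ with $\rH^i(T)$. Here I would use that $T$ is a bounded complex of finitely generated \emph{torsion} modules, so that $\Gamma_{\fm}$ acts as the identity on each term $T_j$. More precisely, I would argue that $\Gamma_{\fm}$ is exact on the subcategory of torsion $\FI$-modules (being the identity there) so that one may compute $\rR\Gamma_{\fm}(T)$ by applying $\Gamma_{\fm}$ termwise, which recovers $T$ itself; thus $\rR\Gamma_{\fm}(T) \cong T$ and $\rR^i\Gamma_{\fm}(T) = \rH^i(T)$. A small subtlety is that to compute $\rR\Gamma_{\fm}(T)$ one wants the terms of $T$ to be acyclic for $\Gamma_{\fm}$; one way around this is simply to observe that since $\Gamma_{\fm}|_{\text{torsion}}$ is the identity, the inclusion of torsion modules into all $\FI$-modules identifies the derived functor $\rR\Gamma_{\fm}$ restricted to (complexes of) torsion modules with the identity, which is what is needed.

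Finally, the ``in particular'' assertions follow immediately: $\rH^i_{\fm}(M) = \rH^i(T)$ is a subquotient of the finitely generated torsion module $T_i$ (over the noetherian ring $\bk$, submodules and quotients of finitely generated $\FI$-modules are finitely generated by \cite[Theorem~A]{nagpal}), hence finitely generated and torsion; and since $T$ is a bounded complex, $\rH^i(T) = 0$ for $i \gg 0$. I do not anticipate a serious obstacle here: the one step that needs a little care is the vanishing $\rR\Gamma_{\fm}(F) = 0$, where one should make sure the termwise vanishing from Theorem~\ref{thm:tor-vanishing} propagates to the total complex via the filtration/truncation spectral sequence, but this is routine since the complex is bounded.
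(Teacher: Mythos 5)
Your argument is correct and follows essentially the same route as the paper: apply $\rR\Gamma_{\fm}$ to the triangle, use Theorem~\ref{thm:tor-vanishing} to get $\rR\Gamma_{\fm}(F)=0$, and identify $\rR\Gamma_{\fm}(T)$ with $T$ because torsion modules satisfy $\rR\Gamma_{\fm}(N)=N$. The only soft spot is your justification of that last fact---$\Gamma_{\fm}$ being the identity on torsion modules does not by itself yield acyclicity---but the paper invokes $\rR\Gamma_{\fm}(N)=N$ for torsion $N$ as a known fact (cf.\ \cite[Theorem~E]{li-ramos}), so you are using the same ingredient.
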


\begin{proof}
This follows from Theorem~\ref{thm:tor-vanishing} and the fact that $\rR \Gamma_{\fm}(N)=N$ if $N$ is a torsion $\FI$-module. See also \cite[Theorem~E]{li-ramos}.
\end{proof}

For a non-zero graded $\bk$-module $M$, we let $\maxdeg(M)$ be the maximum degree in which $M$ is non-zero, or $\infty$ if $M$ is non-zero in arbitrarily high degrees. We also put $\maxdeg(M)=-\infty$ if $M = 0$. With this notation, we have
\begin{displaymath}
\reg(M)=\max_{i \ge 0} \left[ \maxdeg(\Tor_i(M))-i \right], \qquad
h^i(M)=\maxdeg \rH^i_{\fm}(M).
\end{displaymath}

\section{A result on symmetric group representations} \label{sec:combinatorics}

Over a field of characteristic~$0$, representations of symmetric groups decompose as a direct sum of simple representations, and the simples are indexed by partitions. Often, the number of rows in the partitions that appear gives useful information about the representation. Our goal is to extend the notion of ``the number of rows'' to a more general ring. Call a two-sided ideal $I \subseteq \bk[S_p]$ {\bf good} if the following properties hold: 
\begin{enumerate}
\item $I$ is idempotent,
\item $I$ annihilates $\sgn_p$,
\item $I$ does not annihilate $\Ind_{S_{p-1}}^{S_p}(\sgn_{p-1}) \otimes_{\bk} M$ for any non-zero $\bk$-module $M$,
\item $I$ is $\bk$-flat (and thus $\bk$-projective).
\end{enumerate}
We show that if $\bk[S_p]$ has a good ideal then for a $\bk[S_n]$-module $M$ and $n-\frac{n}{p} \le k \le n$, we can make sense of the number of rows in $M$ being equal to $k$.

\begin{proposition}
If $2$ is invertible in $\bk$, then there is a good ideal in $\bk[S_2]$.
\end{proposition}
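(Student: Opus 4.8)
The plan is to write down an explicit good ideal and check the four conditions, all of which become elementary for $p=2$. Let $\tau\in S_2$ be the transposition; since $2$ is invertible the element $e=\tfrac12(1+\tau)\in\bk[S_2]$ makes sense, and $e^2=e$. I would take $I=\bk[S_2]\,e=\bk e$, the ideal cutting out the trivial isotypic part. Because $S_2$ is abelian, $\bk[S_2]$ is commutative, so $e$ is central and there is no distinction between left, right, and two-sided ideals; equivalently, under the Chinese Remainder decomposition $\bk[S_2]\cong\bk[x]/(x^2-1)\cong\bk\times\bk$ (valid as $2$ is a unit), $I$ is the first factor.

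Next I would verify conditions (a), (b), (d), which are immediate. For (a), idempotence of $I=\bk e$ follows from $e^2=e$. For (b), $\tau$ acts by $-1$ on $\sgn_2$, so $e=\tfrac12(1+\tau)$ acts by $0$ there, hence $I$ annihilates $\sgn_2$. For (d), the decomposition $\bk[S_2]=\bk e\oplus\bk(1-e)$ of $\bk$-modules identifies $I=\bk e$ with a free rank-one $\bk$-module (injectivity of $x\mapsto xe$ uses that $2$ is a unit), so $I$ is $\bk$-flat, hence $\bk$-projective.

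The one condition with any content is (c): $I$ must not annihilate $\Ind_{S_1}^{S_2}(\sgn_1)\otimes_\bk M$ for any nonzero $\bk$-module $M$. Since $S_1$ is trivial, $\sgn_1=\bk$, so $\Ind_{S_1}^{S_2}(\sgn_1)=\bk[S_2]$ is the regular representation and $\Ind_{S_1}^{S_2}(\sgn_1)\otimes_\bk M=\bk[S_2]\otimes_\bk M$ with $S_2$ acting through the first tensor factor. Using $e\cdot\bk[S_2]=\bk e$ (because $e(1-e)=0$), I would compute
\[
I\cdot\bigl(\bk[S_2]\otimes_\bk M\bigr)=(e\cdot\bk[S_2])\otimes_\bk M=\bk e\otimes_\bk M\cong M,
\]
which is nonzero whenever $M\neq 0$. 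This yields that $I$ is good.

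I do not anticipate a genuine obstacle here: for $p=2$ every condition reduces to a basic fact about the two-element group once $2$ is invertible. The only point requiring a little care is (c), which quantifies over \emph{all} $\bk$-modules $M$, not merely finitely generated ones; this causes no difficulty because multiplication by $e$ on $\bk[S_2]$ is the projection onto a free $\bk$-module summand, and therefore stays injective (indeed computes as above) after applying $-\otimes_\bk M$.
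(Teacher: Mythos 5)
Your proof is correct and is essentially the paper's argument: your ideal $\bk e$ with $e=\tfrac12(1+\tau)$ is exactly the ideal generated by the norm element $\rN=1+(1,2)$ used in the paper (since $2$ is a unit), and the four verifications proceed the same way. The extra observations (Chinese Remainder decomposition, tensoring with arbitrary $M$) are fine but not needed beyond what the paper does.
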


\begin{proof}
Let $\rN=1+(1,2)$ be the norm element of $\bk[S_2]$, and let $I$ be the two-sided ideal generated by $\rN$. We verify that $I$ is good:
\begin{enumerate}
\item We have $\rN^2=2 \rN$, and so, since~2 is invertible, $I$ is idempotent.
\item It is clear that $\rN$ annihilates $\sgn_2$, and so $I$ does as well.
\item If $M$ is a $\bk$-module then $\Ind_{S_1}^{S_2}(\sgn_1) \otimes_\bk M = \bk[S_2] \otimes_\bk M$, which is clearly not annihilated by $\rN$.
\item As a $\bk$-module, $I$ is free of rank~1, and thus $\bk$-flat. \qedhere.
\end{enumerate}
\end{proof}

\begin{proposition}
If $3$ is invertible in $\bk$, then there is a good ideal in $\bk[S_3]$.
\end{proposition}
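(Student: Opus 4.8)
The plan is to follow the strategy of the preceding proposition: exhibit an explicit central idempotent of $\bk[S_3]$ that can be written down using only the invertibility of $3$, and take $I$ to be the two-sided ideal generated by the complementary idempotent. The key point is that, although the norm element of all of $S_3$ requires $6 \in \bk^\times$, the norm of the alternating subgroup requires only $3 \in \bk^\times$. Concretely, set $\rN = e + (1,2,3) + (1,3,2) = \sum_{g \in A_3} g$. Since $A_3$ is normal in $S_3$, conjugation by any element of $S_3$ permutes $A_3$ and fixes $\rN$, so $\rN$ is central, and a one-line computation gives $\rN^2 = 3\rN$. Hence $e_1 \coloneq \tfrac13 \rN$ is a central idempotent, and I would take
\[
I \coloneq (1 - e_1)\,\bk[S_3] = \bk[S_3]\,(1 - e_1),
\]
a two-sided ideal. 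It then remains to check the four conditions defining a good ideal.

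Three of the four should be routine. For (1): $I$ is generated by the central idempotent $1 - e_1$, so $I^2 = I$. For (2): every $3$-cycle is even, so $\rN$ acts as multiplication by $3$ on $\sgn_3$ and $e_1$ acts as the identity; thus the algebra homomorphism $\bk[S_3] \to \bk$ defining $\sgn_3$ sends $1 - e_1$ to $0$, hence kills all of $I$, so $I$ annihilates $\sgn_3$. For (4): $I$ is a $\bk$-module direct summand of $\bk[S_3] \cong \bk^{\oplus 6}$, the complement being $e_1 \bk[S_3]$, so $I$ is $\bk$-projective and in particular $\bk$-flat.

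The substance is condition (3), which I expect to be the main obstacle. Let $V = \Ind_{S_2}^{S_3}(\sgn_2)$, a free $\bk$-module of rank $3$. Writing $S_2 = \langle (1,2) \rangle$, taking $v$ to be a generator of $\sgn_2$ and $e, (1,3), (2,3)$ as left coset representatives, the elements $v_0 = e \otimes v$, $v_1 = (1,3) \otimes v$, $v_2 = (2,3) \otimes v$ form a $\bk$-basis of $V$. Using that $\rN$ is central together with the relations $(1,2,3) = (1,3)(1,2)$ and $(1,3,2) = (2,3)(1,2)$, one computes
\[
\rN v_0 = v_0 - v_1 - v_2, \qquad \rN v_1 = \rN v_2 = -v_0 + v_1 + v_2,
\]
so $\ker(\rN \mid V) = \{ a v_0 + b v_1 + c v_2 : a = b + c \}$ is free of rank $2$, with basis $v_0 + v_1$ and $v_0 + v_2$. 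Since $3 \in \bk^\times$ we have $\ker(e_1 \mid V) = \ker(\rN \mid V)$, and idempotency of $e_1$ gives $(1 - e_1)V = \ker(e_1 \mid V)$, a free $\bk$-module of rank $2$ that is a direct summand of $V$. Consequently, for any nonzero $\bk$-module $M$,
\[
(1 - e_1)\bigl( V \otimes_\bk M \bigr) = \bigl( (1 - e_1)V \bigr) \otimes_\bk M \cong M^{\oplus 2} \neq 0,
\]
and since $1 - e_1 \in I$, the ideal $I$ does not annihilate $V \otimes_\bk M$. Over a field of characteristic $0$ this step is immediate, since there $e_1 = e_{\triv} + e_{\sgn}$ and $I$ is the standard $2 \times 2$ matrix block, acting faithfully on the standard constituent of $V$; but with only $3$ invertible one must verify the freeness of $(1 - e_1)V$ by hand, which is where the explicit computation above—and the attendant coset bookkeeping—is needed.
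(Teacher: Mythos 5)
Your proof is correct, and it takes a genuinely different route from the paper's. You build the ideal from the central idempotent $e_1=\tfrac13(e+(1,2,3)+(1,3,2))$, the normalized norm of the alternating subgroup $A_3$, and set $I=(1-e_1)\bk[S_3]$; the paper instead works with the non-central idempotent $\tau=(1+(1,2))(1+(1,3))-\tfrac23\rN$ (with $\rN$ the full norm of $S_3$), verifies condition (3) by an explicit computation in $\Ind_{S_2}^{S_3}(\sgn_2)\otimes_\bk M\cong M^{\oplus 3}$, and proves flatness by identifying its ideal with the ideal $J$ generated by differences of transpositions. Your centrality makes conditions (1), (2), (4) essentially free (idempotent generator, sign character kills $1-e_1$, complementary summand $e_1\bk[S_3]$), at the cost of the explicit rank computation for (3); that computation is right: the matrix of the $A_3$-norm on $\Ind_{S_2}^{S_3}(\sgn_2)$ has image spanned by the unimodular vector $v_0-v_1-v_2$, so its kernel $\{a=b+c\}$ is a free rank-$2$ direct summand, whence $(1-e_1)(V\otimes_\bk M)\cong M^{\oplus 2}\neq 0$. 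It is worth noting that the two constructions in fact produce the same ideal: when $3$ is invertible, both equal the kernel of the natural map $\bk[S_3]\to\bk[S_3/A_3]$ (the paper's $J$), since $1-e_1=\tfrac13\big((e-(1,2,3))+(e-(1,3,2))\big)\in J$ and conversely $e_1$ kills all differences of transpositions; so the difference is one of presentation and verification strategy rather than of the underlying object, with your choice of generator arguably making the structure (and the ``$\bk$-module summand'' nature of $I$) more transparent.
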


\begin{proof}
Let $\rN=\sum_{\sigma \in S_3} \sigma$ be the norm element of $\bk[S_3]$. Note that it is central. Let $I$ be the two-sided ideal generated by 
\begin{displaymath}
\tau = (1 + (1,2))(1 + (1,3)) - \tfrac{2}{3} \rN.
\end{displaymath}
Note that $\tfrac{2}{3}$ makes sense as we have assumed~$3$ to be invertible in $\bk$. We now verify that $I$ is good:
\begin{enumerate}
\item A straightforward computation shows that $\tau^2=\tau$, and so $I$ is idempotent.
\item Both $(1+(1,2))$ and $\rN$ annihilate $\sgn_3$, so the same is true for $I$. \item We have $\Ind_{S_2}^{S_3}(\sgn_2) \otimes_\bk M \cong M^{\oplus 3}$ where 
\[
\sigma \cdot (m_1,m_2,m_3) = \sgn(\sigma) (m_{\sigma^{-1}(1)}, m_{\sigma^{-1}(2)}, m_{\sigma^{-1}(3)}).
\]
Let $x \in M$ be any non-zero element. Then $\tau \cdot (x,0,0) = (x,-x,0) \ne 0$, so $I$ does not annihilate $\Ind_{S_2}^{S_3}(\sgn_2) \otimes_\bk M$.
\item We first claim that $I$ is equal to the ideal $J$ generated by the differences of two transpositions. The sum of the coefficients of the odd (or even) permutations appearing in $3 \tau$ is zero. This shows that $I \subset J$. The reverse inclusion $J \subset I$ follows from the following identity \[(1,3) - (1,2) = (1,3) \tau - \tau (1,2). \]  This establishes the claim. Clearly, we have  $\bk[S_3]/J \cong \bk^2$. This implies that $\bk[S_3]/I \cong \bk^2$. Thus, as a $\bk$-module, $I$ is a summand of $\bk[S_3]$, and therefore $\bk$-flat. \qedhere
\end{enumerate}
\end{proof}

Throughout the rest of this section, we fix an integer $p \ge 1$ and a good ideal $I$ of $\bk[S_p]$. If $pr \le n$, we define $I_n(r)$ to be the two-sided ideal of $\bk[S_n]$ generated by $I^{\boxtimes r}$ under the inclusion $\bk[S_p^{\times r}] \hookrightarrow \bk[S_n]$. For convenience, we set $I_n(r) = 0$ if $pr > n$. It is clear that $I_n(r)$ is idempotent.

\begin{definition}
\label{def:nu}
Let $M$ be a $\bk[S_n]$-module. We define $\nu(M)=n-r$ if $M$ is not annihilated by $I_n(r)$ but is annihilated by $I_n(s)$ for all $r<s$, and we set $\nu(0) = \infty$.
\end{definition}

\begin{proposition} \label{prop:nuses}
Consider an exact sequence
\begin{displaymath}
0 \to M_1 \to M_2 \to M_3 \to 0
\end{displaymath}
of $\bk[S_n]$-modules. Then $M_2$ is annihilated by $I_n(r)$ if and only if both $M_1$ and $M_3$ are. Consequently,
\begin{displaymath}
\nu(M_2) = \min(\nu(M_1), \nu(M_3)).
\end{displaymath}
\end{proposition}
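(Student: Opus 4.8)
The plan is to first prove the statement about $I_n(r)$ annihilating $M_2$, and then deduce the formula for $\nu(M_2)$ as a formal consequence.

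For the first claim, recall that $I_n(r)$ is idempotent, hence generated by an idempotent element is not quite automatic, but at minimum for any module $N$, the submodule $I_n(r) N$ is a direct summand is again not automatic — but what \emph{is} true and sufficient is the following: since $I_n(r)$ is idempotent as a two-sided ideal, we have $I_n(r) N = I_n(r)(I_n(r) N)$ for any module $N$, and more to the point, the operation $N \mapsto I_n(r) N$ is left-exact-ish in the needed direction. Let me instead argue directly. The ``only if'' direction is immediate: if $I_n(r)$ kills $M_2$, then it kills the submodule $M_1$ and the quotient $M_3$. For the ``if'' direction, suppose $I_n(r)$ annihilates both $M_1$ and $M_3$. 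Then for $x \in I_n(r)$ and $m \in M_2$, the image of $xm$ in $M_3$ is $x \cdot \bar m = 0$, so $xm \in M_1$; but then $x' (xm) = 0$ for $x' \in I_n(r)$, so $I_n(r)^2 M_2 \subseteq $ (stuff killed). The key point: since $I_n(r)$ is idempotent, $I_n(r) = I_n(r)^2$, so every element of $I_n(r)$ is a sum of products $x' x$ with $x', x \in I_n(r)$; each such product annihilates $M_2$ by the computation just given. Hence $I_n(r)$ annihilates $M_2$. This is the heart of the argument, and the only subtlety is correctly using idempotence of the two-sided ideal; I expect this to be routine once set up carefully.

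For the formula $\nu(M_2) = \min(\nu(M_1), \nu(M_3))$, I unwind Definition~\ref{def:nu}. Write $\nu(M_j) = n - r_j$, meaning $I_n(r_j)$ does not annihilate $M_j$ but $I_n(s)$ does for all $s > r_j$. I need to show $n - r_2 = \min(n - r_1, n - r_3)$, i.e. $r_2 = \max(r_1, r_3)$. First, for any $s > \max(r_1, r_3)$, $I_n(s)$ annihilates both $M_1$ and $M_3$, hence annihilates $M_2$ by the first claim; so $r_2 \le \max(r_1, r_3)$. Conversely, say $r_1 = \max(r_1, r_3)$ (the other case is symmetric). Then $I_n(r_1)$ does not annihilate $M_1$; since $M_1 \subseteq M_2$, $I_n(r_1)$ does not annihilate $M_2$ either, so $r_2 \ge r_1 = \max(r_1, r_3)$. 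Combining, $r_2 = \max(r_1, r_3)$, which is exactly the claimed identity. One should also note the edge cases where some module is zero: if, say, $M_1 = 0$, then $\nu(M_1) = +\infty$ by convention (annihilated by all $I_n(s)$), and indeed $\min$ picks out $\nu(M_3) = \nu(M_2)$ correctly; the argument above goes through with the convention that $r_j = -\infty$ when $M_j = 0$.

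The main obstacle, such as it is, is purely bookkeeping: making the idempotence argument in the ``if'' direction airtight, since $I_n(r)$ is a two-sided ideal rather than a principal one generated by a single central idempotent, so one cannot simply write $I_n(r) = e\bk[S_n]$ and split. The fix is to work elementwise with the identity $I_n(r) = I_n(r) \cdot I_n(r)$ as above. Everything else is a formal consequence.
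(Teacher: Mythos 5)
Your argument is correct and is essentially the paper's own proof: the ``if'' direction uses exactly the same observation that $I_n(r)M_2 \subseteq M_1$ forces $I_n(r)^2 M_2 = 0$, and then idempotence $I_n(r)^2 = I_n(r)$ finishes it, while the $\nu$ identity follows by the same routine unwinding of Definition~\ref{def:nu}. The preliminary hedging about splitting off a summand is unnecessary, but the elementwise idempotence argument you settle on is precisely what is needed.
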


\begin{proof}
If $M_2$ is annihilated by $I_n(r)$ then obviously $M_1$ and $M_3$ are. Suppose that $M_1$ and $M_3$ are annihilated by $I_n(r)$. Then the image of $I_n(r) M_2$ in $M_3$ vanishes, and so $I_n(r) M_2 \subset M_1$, and so $I_n(r)^2 M_2=0$. But $I_n(r)^2=I_n(r)$, and so $M_2$ is annihilated by $I_n(r)$.
\end{proof}

\begin{lemma} \label{lem:rth-iterate}
Let $N$ be any non-zero $\bk$-module. Then the ideal $I^{\boxtimes r}$ of $\bk[S_p^{\times r}]$ does not annihilate $(\Ind_{S_{p-1}}^{S_p} \sgn_{p-1})^{\boxtimes r} \otimes_{\bk}  N$.
\end{lemma}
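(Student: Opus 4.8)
The plan is to reduce to the defining property of a good ideal — namely property (c), which says $I$ does not annihilate $\Ind_{S_{p-1}}^{S_p}(\sgn_{p-1}) \otimes_{\bk} M$ for any nonzero $\bk$-module $M$ — and then induct on $r$. Write $V = \Ind_{S_{p-1}}^{S_p} \sgn_{p-1}$, so that we must show $I^{\boxtimes r}$ does not annihilate $V^{\boxtimes r} \otimes_{\bk} N$ inside $\bk[S_p^{\times r}]$-modules. The key point is that the action of $\bk[S_p^{\times r}] = \bk[S_p]^{\otimes r}$ on $V^{\boxtimes r} \otimes_{\bk} N$ is the ``external tensor product'' of the $r$ commuting actions of $\bk[S_p]$ on the successive factors, so $I^{\boxtimes r} = I^{\otimes r}$ acts factor by factor. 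Thus $I^{\boxtimes r}(V^{\boxtimes r} \otimes_{\bk} N) = 0$ would force, after grouping the last $r-1$ factors together, that $I$ annihilates $V \otimes_{\bk} W$ where $W = V^{\boxtimes (r-1)} \otimes_{\bk} N$, regarded merely as a $\bk$-module.

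First I would set up the induction. The base case $r = 1$ is precisely property (c) of a good ideal applied to $M = N$. For the inductive step, suppose $I^{\boxtimes(r-1)}$ does not annihilate $V^{\boxtimes(r-1)} \otimes_{\bk} N$; call the image $W$, which is a nonzero submodule of $V^{\boxtimes(r-1)} \otimes_{\bk} N$, in particular a nonzero $\bk$-module. Now $V^{\boxtimes r} \otimes_{\bk} N \cong V \boxtimes (V^{\boxtimes(r-1)} \otimes_{\bk} N)$ as a $\bk[S_p] \otimes \bk[S_p^{\times(r-1)}]$-module, and $I^{\boxtimes r} = I \boxtimes I^{\boxtimes(r-1)}$ acts as the tensor product of the two actions. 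Since $I^{\boxtimes(r-1)}$ does not kill $V^{\boxtimes(r-1)} \otimes_\bk N$, it suffices to show $I$ does not kill $V \otimes_\bk W$ — but this is property (c) again, applied to the nonzero $\bk$-module $M = W$. (Here I use that the image of $I \boxtimes I^{\boxtimes(r-1)}$ acting on $V \boxtimes (V^{\boxtimes(r-1)} \otimes_\bk N)$ contains, and in fact equals, the image of $I$ acting on $V \otimes_\bk (\text{image of } I^{\boxtimes(r-1)})$, because the two actions commute and $I$, $I^{\boxtimes(r-1)}$ are idempotent.)

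The one genuine subtlety — and the main obstacle — is the interaction between taking the image of one idempotent ideal and then checking non-annihilation by the other. Concretely, I want: if $A$ acts on $X$ and $B$ acts on $Y$ with both actions on $X \otimes_\bk Y$ commuting, and $B$ is idempotent with $BY \ne 0$, and $A$ does not annihilate $X \otimes_\bk (BY)$, then $A \boxtimes B$ does not annihilate $X \otimes_\bk Y$. This is straightforward: $(A \boxtimes B)(X \otimes_\bk Y) \supseteq A(X \otimes_\bk BY) = (A \boxtimes B)(X \otimes_\bk BY)$, using $B(BY) = BY$; since the right-hand side is nonzero by hypothesis, so is the left. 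The flatness hypothesis (d) is what guarantees that $BY$ sits inside $Y$ as a genuine $\bk$-submodule and that forming $X \otimes_\bk BY \hookrightarrow X \otimes_\bk Y$ behaves well, which is needed to make the displayed containment literally correct rather than merely true up to a quotient. Once this bookkeeping lemma is in hand, the induction closes immediately.
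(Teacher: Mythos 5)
Your induction on $r$ via property (c) of good ideals, using idempotence of $I^{\boxtimes (r-1)}$ to absorb the image $W=I^{\boxtimes(r-1)}(V^{\boxtimes(r-1)}\otimes_\bk N)$ back into the action of $I^{\boxtimes r}$, is exactly the argument the paper intends (its proof is the one-line ``induction on $r$ and the definition of good''), and it is correct. One small correction to your bookkeeping step: the injectivity of $V \otimes_\bk W \hookrightarrow V \otimes_\bk Y$ is not what hypothesis (d) (flatness of $I$) provides; it follows instead from the fact that $V=\Ind_{S_{p-1}}^{S_p}\sgn_{p-1}$ is a free $\bk$-module of rank $p$, so tensoring with $V$ preserves the inclusion $W \subseteq Y$.
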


\begin{proof} 
This follows by induction on $r$ and the definition of good.
\end{proof}

The following proposition is motivated by \cite[Proposition~3.1]{castelnuovo-regularity}.

\begin{proposition}
\label{prop:indnu}
Let $M$ be a non-zero representation of $S_d$, let $(p-1)d \le k$, put $n=k+d$. Then we have $\nu(M \otimes \sgn_k)=k$. 
\end{proposition}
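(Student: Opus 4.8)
The plan is to establish the two inequalities $\nu(M \otimes \sgn_k) \ge k$ and $\nu(M \otimes \sgn_k) \le k$ separately (we may assume $M \ne 0$, so that $\nu$ is defined). Throughout, write $V = M \otimes \sgn_k = \Ind_{S_d \times S_k}^{S_n}(M \boxtimes \sgn_k)$ and think of $[d] \subseteq [n]$ as the ``$M$-part''. Unwinding Definition~\ref{def:nu}, and using that $I_n(s) \subseteq I_n(d+1)$ for all $s \ge d+1$, the first inequality is equivalent to the assertion that $I_n(d+1)$ annihilates $V$, and the second to the assertion that $I_n(d)$ does not annihilate $V$. For $r$ with $pr \le n$, fix a block embedding $S_p^{\times r} \hookrightarrow S_n$; since $I_n(r)$ is the two-sided ideal generated by $I^{\boxtimes r}$ and $\bk[S_n]\cdot V = V$, we have $I_n(r)\cdot V = \bk[S_n]\cdot I^{\boxtimes r}\cdot V$, so $I_n(r)$ annihilates $V$ if and only if $I^{\boxtimes r}$ annihilates $\Res_{S_p^{\times r}} V$. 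The bridge to the hypotheses on $I$ is Mackey's formula, which expresses $\Res_{S_p^{\times r}\times S_{n-pr}} V$ as a direct sum of modules $\Ind_{H_g}^{S_p^{\times r}\times S_{n-pr}}\bigl({}^g(M\boxtimes\sgn_k)|_{H_g}\bigr)$, one per double coset; each summand records how a set partition $[n] = A \sqcup B'$ with $|A| = d$ (the image of $[d]$ under a double coset representative) meets the $r$ blocks and the remainder block.

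For the upper bound, we may assume $p(d+1) \le n$, since otherwise $I_n(d+1) = 0$ and there is nothing to prove. Take $r = d+1$. In each Mackey summand the set $A$ has size $d$, so among the $d+1$ blocks some $B_i$ is disjoint from $A$; then $B_i \subseteq B'$, and because $S_{B_i}$ is a direct factor of both $H_g$ and the ambient product $S_p^{\times(d+1)}\times S_{n-p(d+1)}$ and permutes no coordinate outside $B_i$, the summand has the form $\sgn_{B_i} \boxtimes (\text{a module of the complementary factors})$, hence restricts to $S_{B_i}$ as a direct sum of copies of $\sgn_{B_i} \cong \sgn_p$. The factor of $I^{\boxtimes(d+1)}$ supported on $B_i$ lies in $\bk[S_{B_i}]$ and annihilates $\sgn_p$, this being one of the defining properties of a good ideal; since the $d+1$ factors of $I^{\boxtimes(d+1)}$ commute, $I^{\boxtimes(d+1)}$ annihilates every summand, hence annihilates $\Res_{S_p^{\times(d+1)}}V$, and so $I_n(d+1)$ annihilates $V$.

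For the lower bound we have $pd \le n$ because $n = k + d \ge (p-1)d + d = pd$; put $m = n - pd = k - (p-1)d \ge 0$ and take $r = d$. Among the Mackey summands, pick the one for the double coset in which $A$ meets each block $B_i$ in exactly one point and is disjoint from the remainder block (possible since there are $d$ blocks and $|A| = d$). For this summand $H_g$ is $S_{p-1}^{\times d}\times S_m$ under the standard block-respecting identification, the trivial groups on the points $A \cap B_i$ contributing nothing, and the restricted module is $\sgn_{p-1}^{\boxtimes d} \boxtimes \sgn_m \otimes_{\bk} M$; hence, since induction commutes with external tensor products, this summand equals $\bigl(\Ind_{S_{p-1}}^{S_p}\sgn_{p-1}\bigr)^{\boxtimes d}\boxtimes\sgn_m\otimes_{\bk}M$. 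As a $\bk[S_p^{\times d}]$-module this is $\bigl(\Ind_{S_{p-1}}^{S_p}\sgn_{p-1}\bigr)^{\boxtimes d}\otimes_{\bk}N$ with $N = \sgn_m\otimes_{\bk}M \cong M \ne 0$, which by Lemma~\ref{lem:rth-iterate} is not annihilated by $I^{\boxtimes d}$. Being a direct summand of $\Res_{S_p^{\times d}}V$, it forces $I^{\boxtimes d}$ not to annihilate $V$, so $I_n(d)$ does not annihilate $V$.

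The conceptual content is modest: for the upper bound it is the fact that a good ideal annihilates $\sgn_p$ together with a pigeonhole over the $d+1$ blocks, and for the lower bound it is Lemma~\ref{lem:rth-iterate}, which is where the remaining properties of a good ideal enter. The step that requires care --- and the most likely place to slip --- is the Mackey bookkeeping: correctly identifying $H_g$ and the restricted module for the two relevant double cosets, and checking that the displayed tensor factors genuinely split off, i.e.\ that the factor of $I^{\boxtimes r}$ attached to a given block interacts only with that block.
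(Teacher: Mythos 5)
Your proof is correct and follows essentially the same route as the paper: Mackey decomposition of $\Res_{S_p^{\times r}}(M \otimes \sgn_k)$, with the lower bound on $\nu$ coming from the same double coset and Lemma~\ref{lem:rth-iterate}, and the upper bound from the pigeonhole observation that some block carries a split-off $\sgn_p$ factor killed by $I$. The only cosmetic difference is that you handle $s>d+1$ via the (easily verified) containment $I_n(s) \subseteq I_n(d+1)$, whereas the paper argues the annihilation directly for every $r>d$.
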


\begin{proof}
Set $\nu = \nu(M \otimes \sgn_k)$. We first show that $I_n(r)$ does not annihilate $M \otimes \sgn_k$ for $r \le d$. The Mackey decomposition theorem gives
\[
\Res_{S_p^{\times r}}^{S_n}(M \otimes \sgn_k) = \bigoplus_{g \in (S_d \times S_k) \backslash S_n / S_p^{\times r}} \Ind^{S_p^{\times r}}_{S_p^{\times r} \cap (S_d \times S_k)^g} \Res^{(S_d \times S_k)^g}_{S_p^{\times r} \cap (S_d \times S_k)^g} (M \otimes \sgn_k)^g
\]
where the sum is over double coset representatives, and $(-)^g$ means conjugation by $g$. Taking $g$ so that $S_p^{\times r} \cap (S_d \times S_k)^g = S_{p-1}^{\times r}$, we see that it contains $(\Ind_{S_{p-1}}^{S_p} \sgn_{p-1})^{\boxtimes r} \otimes_{\bk}  M$ as a direct summand. Since $I_n(r)$ is generated by $I^{\boxtimes r}$, it suffices to show that $I^{\boxtimes r}$ does not annihilate this direct summand. But this follows from Lemma~\ref{lem:rth-iterate}.

Now we show that $I_n(r)$ annihilates $M \otimes \sgn_k$ if $n/p \ge r > d$.  Note that $\Res_{S_p^{\times r}}^{S_n}(M \otimes \sgn_k)$ decomposes naturally into a finite direct sum of $\bk[S_p^{\times r}]$-modules of the form $\boxtimes_{i=1}^r M_i$. Since $r >d$, at least one $M_i$ is isomorphic to $\sgn_p$ for each such direct summand. Thus $I^{\boxtimes r}$ annihilates each such direct summand. This shows that $I_n(r)$ annihilates $M \otimes \sgn_k$, completing the proof.
\end{proof}

\begin{remark}
Our invariant $\nu$ is an attempt to extend the notion of ``minimum number of rows in a simple object'' away from characteristic zero. To see this, let notation be as in  Proposition~\ref{prop:indnu}. In characteristic~$0$, the partitions in $M$ have $d$ boxes. Thus, by the Pieri rule, every partition appearing in $M \otimes \sgn_k$ has at least $k$ rows, and some have exactly $k$ rows. 

Since $I_n(r) = 0$ for $r> n/p$, our invariant $\nu$ can't distinguish between partitions with at most $n - \frac{n}{p}$ rows. 
\end{remark}

\section{The main theorem} \label{sec:regularity}

The aim of this section is to prove Theorem~\ref{thm:main-intro}. Before beginning we note that if $M$ is a graded $\bk$-module and $M[\frac{1}{2}]$ and $M[\frac{1}{3}]$ are the localizations of $M$ obtained by inverting 2 and~3, respectively, then 
\[
\maxdeg(M) = \max(\maxdeg(M[\tfrac{1}{2}]), \maxdeg(M[\tfrac{1}{3}])).
\]
(Proof: the kernel of the localization map $M \to M[\frac{1}{p}]$ is the set of elements annihilated by a power of $p$; if $x \in M$ is annihilated by both $2^n$ and $3^m$ then $x=0$, since $2^n$ and $3^m$ are coprime.) Localization commutes with Tor and local cohomology, so it suffices to prove Theorem~\ref{thm:main-intro} assuming that either 2 or 3 is invertible in $\bk$. In particular, in the remainder of this section, we may assume that $\bk[S_p]$ has a good ideal for either $p=2$ or $p=3$.

For a complex $M$ of $\FI$-modules, we define
\begin{displaymath}
\Tor_n(M)=\rH^{-n}(M \otimes^{\rL}_{\bA} \bk).
\end{displaymath}
(We use cohomological indexing throughout this section.) The {\bf regularity} of $M$ is the minimal $\rho$ so that $\maxdeg(\Tor_n(M)) \le n+\rho$ for all $n \in \bZ$.

\begin{lemma} \label{lem:tornu}
Let $M$ be a finite length complex of finitely generated torsion $\FI$-modules. Let $m$ be minimal such that $M^m \ne 0$. Then $\nu(\Tor_n(M)) \ge n+m$ for all $n \gg 0$.
\end{lemma}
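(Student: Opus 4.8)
The plan is to reduce to the case of a single torsion module concentrated in one cohomological degree, and then apply the structural results from Sections 2 and 3. First I would use the fact that $\rR\Gamma_{\fm}$ is the identity on torsion modules together with Proposition~\ref{prop:loccoh}; for a bounded complex $M$ of finitely generated torsion $\FI$-modules, each cohomology module $\rH^j(M)$ is finitely generated and torsion, so after replacing $M$ by a minimal-type resolution we may work with the hypercohomology spectral sequence computing $\Tor_n(M)$. Since the terms $\Tor_a(\rH^b(M))$ assemble (with signs on indexing absorbed into our cohomological convention) into $\Tor_n(M)$, and since $\nu$ is well-behaved under extensions by Proposition~\ref{prop:nuses} ($\nu$ of an extension is the min of the two $\nu$'s, and $\nu$ only drops under taking subquotients), it suffices to produce a lower bound on $\nu(\Tor_a(N))$ for $N$ a fixed finitely generated torsion $\FI$-module, and then track how the spectral sequence differentials can only kill classes with equal $\nu$.

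The heart of the argument is the computation of $\Tor_\bullet$ of a torsion module. By Theorem~\ref{thm:tor-vanishing} (or directly), a torsion $\FI$-module, being a successive extension of $\bA/\bA_+$-modules in the relevant sense, has its $\Tor$ controlled by Proposition~\ref{prop:tor-koszul}: for an $\bA/\bA_+$-module $T$ we have $\Tor_p(T) = T\otimes\sgn_p$. So in degree $n$, $\Tor_p(T)_n$ is a sum of induced pieces $T_d \otimes \sgn_{n-d}$ with $d$ bounded (since $T$ is finitely generated, $T_d = 0$ for $d$ large, say $d \le D$). Now I invoke Proposition~\ref{prop:indnu}: once $(p-1)d \le n-d$, i.e. once $n \ge pd$, we get $\nu(T_d\otimes\sgn_{n-d}) = n-d$. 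Taking $n$ large enough (larger than $pD$) this holds for every $d$ that contributes, and by Proposition~\ref{prop:nuses} the $\nu$ of the whole degree-$n$ piece of $\Tor_p(T)$ is the minimum over contributing $d$ of $n-d$, which is $\ge n - D$. More precisely, pushing the bookkeeping through the filtration of a general torsion module $M$ by $\bA/\bA_+$-modules: if $M^m$ is the lowest nonzero term of the complex and $m_0$ is the minimal degree in which $M^m$ is nonzero, one gets $\nu(\Tor_n(M^m)) \ge n + m_0 \ge n+m$ for $n\gg 0$, because the lowest-degree generator of $M^m$ contributes a summand $(M^m)_{m_0}\otimes \sgn_{n-m_0}$ of $\Tor_n$ whose $\nu$ equals $n - m_0$... wait — I need the inequality to go the right way, so let me phrase it as: every summand appearing has $\nu = n - d$ with $d \le$ (top generating degree), and one must check the statement as written, $\nu(\Tor_n(M)) \ge n+m$, is the correct normalization given the cohomological shift $M^m$ sits in; I would recheck the indexing so that the claimed bound $n+m$ (not $n - (\text{gen degree})$) comes out, presumably because "torsion supported in nonnegative degrees, lowest term in cohomological degree $m$" forces a shift by $m$.

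The main obstacle is exactly this spectral sequence cancellation step: showing that in $\rH^{-n}(M\otimes^{\rL}_{\bA}\bk)$, the contribution from the lowest cohomological layer $M^m$ is not destroyed. The point of introducing $\nu$ is that the $E_2$-terms $\Tor_a(\rH^b(M))$ coming from different layers $b$ can be separated by their $\nu$-values — a differential out of or into a term can only cancel a class against another class of the same $\nu$, by Proposition~\ref{prop:nuses} applied to the (co)kernels, so a class of strictly maximal or strictly minimal $\nu$ among the relevant terms must survive to $E_\infty$. So the plan is: (1) identify, among all $E_2$-terms contributing to $\Tor_n(M)$ for fixed large $n$, the one coming from the lowest layer $M^m$ and its lowest generating degree; (2) use Proposition~\ref{prop:indnu} to pin down its $\nu$ exactly as $n+m$ (after the shift); (3) observe that all other contributing $E_2$-terms have $\nu \ge n+m$ as well (same Proposition, larger cohomological or generating degrees only increase the bound), hence by the additivity/min property of $\nu$ under extensions and subquotients, $\nu(\Tor_n(M)) \ge n+m$; and (4) check $n\gg 0$ is enough to make all the inequalities $(p-1)d \le k$ in Proposition~\ref{prop:indnu} hold simultaneously for the finitely many $(b,d)$ that occur. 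Steps (2) and (3) are where all the care is needed; the rest is formal.
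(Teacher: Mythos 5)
Your ingredients are the right ones (the Koszul resolution, Proposition~\ref{prop:indnu}, and the monotonicity of $\nu$ from Proposition~\ref{prop:nuses}), but the step you explicitly leave unresolved --- ``recheck the indexing so that the claimed bound $n+m$ comes out'' --- is the entire content of the lemma, and your partial bookkeeping points in the wrong direction. The bound $n+m$ has nothing to do with the internal (generating) degrees of the torsion modules, which is what your computations ``$\nu = n-d$'', ``$\ge n-D$'' are tracking; it comes from the homological position of the sign factor. Concretely: tensoring the complex $M$ with the Koszul resolution $\bA\otimes\sgn_{\bullet}$ of $\bk$, the term of the total complex in cohomological degree $-n$ is $\bigoplus_{j\ge 0} M^{j-n}\otimes\sgn_j$, so $\Tor_n(M)=\rH^{-n}(M\otimes^{\rL}_{\bA}\bk)$ is a subquotient of this sum. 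Since $M^i=0$ for $i<m$, only $j\ge n+m$ occurs, and each graded piece $(M^{j-n})_d\otimes\sgn_j$ has $\nu=j\ge n+m$ by Proposition~\ref{prop:indnu} once $n\gg 0$ (the finitely many $d$ that occur are bounded, which is all that the hypothesis $(p-1)d\le j$ of that proposition requires). Monotonicity under direct sums and subquotients (Proposition~\ref{prop:nuses}) then gives $\nu(\Tor_n(M))\ge n+m$. In your notation: a piece $N_d\otimes\sgn_k$ sitting in cohomological degree $i$ of the complex has $\nu=k$ and contributes to $\Tor_{k-i}(M)$, so its $\nu$ equals the homological degree plus $i\ge m$; the generating degree $d$ drops out entirely.

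Two further remarks on the superstructure you propose. First, routing through the hyper-Tor spectral sequence with terms $\Tor_a(\rH^b(M))$ forces you to first prove the single-module case $\nu(\Tor_a(N))\ge a$ for torsion $N$, which is exactly the same indexing issue in disguise, so it buys nothing here; for a lower bound it is enough that $\Tor_n(M)$ is a subquotient of the corresponding term of the total Koszul complex, with no spectral sequence or filtration of $N$ by $\bA/\bA_+$-modules needed. Second, your claim that ``a differential can only cancel a class against another class of the same $\nu$'' is neither needed for this lemma nor correct as stated: $\nu$ is an invariant of modules, not of classes, and maps between modules of different $\nu$ can be nonzero. What the later arguments actually use is weaker: in an exact sequence, a term whose neighbors have strictly larger $\nu$ (or vanish) cannot be killed, which is exactly Proposition~\ref{prop:nuses}.
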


\begin{proof}
Using the Koszul complex, we see that $\Tor_n(M)$ is a subquotient of
\begin{displaymath}
\bigoplus_{j \ge 0} M^{j-n} \otimes \sgn_j.
\end{displaymath}
Only the terms with $j \ge n+m$ contribute. Each of these has $\nu \ge n+m$ by Proposition~\ref{prop:indnu}, and this passes to subquotients by Proposition~\ref{prop:nuses}.
\end{proof}

The following lemma recovers \cite[Theorem~2]{ganli-homology}.

\begin{lemma} \label{lem:torsion-reg}
Let $M$ be a finitely generated non-zero torsion $\FI$-module, and let $\rho=\maxdeg(M)$. Then the regularity of $M$ is $\rho$, and for $n \gg 0$ we have
\begin{displaymath}
\nu(\Tor_n(M)_{n+\rho}) = n.
\end{displaymath}
\end{lemma}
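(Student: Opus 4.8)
The plan is to reduce to the single-module case by dévissage, using the structure theorem only through Proposition~\ref{prop:tor-koszul} and the combinatorial input of \S\ref{sec:combinatorics}. First I would compute $\Tor_\bullet(M)$ directly: applying $-\otimes_\bA\bk$ to the Koszul resolution $\bA\otimes\sgn_\bullet\to\bk$ and using that $M$ is an $\bA/\bA_+$-module (it is torsion, hence $\bA_+$ acts nilpotently — actually one should be careful: a finitely generated torsion $\FI$-module need not be killed by $\bA_+$, so I would instead filter $M$ by the submodules $\bA_+^i M$, whose successive quotients are $\bA/\bA_+$-modules, and use Proposition~\ref{prop:tor-koszul} on each graded piece together with Proposition~\ref{prop:nuses}). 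This shows $\Tor_n(M)$ is a subquotient of $\bigoplus_{j\ge 0} M_{j-n}\otimes\sgn_j$, with the top graded piece in degree $n+\rho$ coming only from the summand $M_\rho\otimes\sgn_{n+\rho}$. Hence $\maxdeg(\Tor_n(M))\le n+\rho$ for all $n$, which gives $\reg(M)\le\rho$.

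Next I would establish the reverse inequality and the $\nu$-statement simultaneously, since the $\nu$-computation is what forces non-cancellation. The key point is that in the degree-$(n+\rho)$ part of the Koszul complex computing $\Tor(M)$, the only term is $M_\rho\otimes\sgn_{n+\rho}$ sitting in homological degree $n$, with the incoming and outgoing differentials involving $M_{\rho}\otimes\sgn_{n+\rho}$ against terms $M_{j-n'}\otimes\sgn_j$ of strictly smaller internal degree $<n+\rho$ — so in internal degree exactly $n+\rho$ the complex is concentrated in a single spot and $\Tor_n(M)_{n+\rho}=M_\rho\otimes\sgn_{n+\rho}$ on the nose (no subquotient is lost). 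Wait — I need to double check the differential bookkeeping: the Koszul differential $\bA\otimes\sgn_{j}\to\bA\otimes\sgn_{j-1}$ has internal degree $+1$, so after tensoring with $M$ and taking the degree-$(n+\rho)$ strand, the term $M_\rho\otimes\sgn_{n+\rho}$ (homological degree $n$) receives a map from $M_{\rho-1}\otimes\sgn_{n+\rho}$ (homological degree $n+1$) and maps to $M_{\rho+1}\otimes\sgn_{n+\rho}$ (homological degree $n-1$), but $M_{\rho+1}=0$ by maximality of $\rho$, so the outgoing map vanishes; the incoming map has image a submodule, so I only get $\Tor_n(M)_{n+\rho}$ is a \emph{quotient} of $M_\rho\otimes\sgn_{n+\rho}$ — actually the cokernel of $M_{\rho-1}\otimes\sgn_{n+\rho}\to M_\rho\otimes\sgn_{n+\rho}$. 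To conclude I would invoke minimality of the resolution $\bA\otimes\sgn_\bullet\to\bk$ (stated in the excerpt: all differentials vanish after $-\otimes_\bA\bk$): this means the composite differential in the complex $M\otimes_\bA(\bA\otimes\sgn_\bullet)$ has its "linear part" captured correctly, and in the top internal degree the incoming differential is zero as well because it factors through $\bA_+\cdot M$, which shifts internal degree; more precisely the map $M_{\rho-1}\otimes\sgn_{n+\rho}\to M_\rho\otimes\sgn_{n+\rho}$ is (up to sign) induced by the action of $t\in\bA_1$, i.e. the multiplication $M_{\rho-1}\to M_\rho$, which need not vanish — so actually $\Tor_n(M)_{n+\rho}=\coker(M_{\rho-1}\to M_\rho)\otimes\sgn_{n+\rho}$, and this is $\Tor_0(M)_\rho\otimes\sgn_{n+\rho}$ in degree $\rho$, which is nonzero since $\rho=\maxdeg(M)$ forces a generator in degree $\rho$. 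Then by Proposition~\ref{prop:indnu} (applied with $d$ such that $(p-1)d\le n$, valid for $n\gg0$) we get $\nu(\Tor_n(M)_{n+\rho})=\nu(\Tor_0(M)_\rho\otimes\sgn_{n+\rho})=n$.

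The main obstacle I expect is exactly the differential bookkeeping in the previous paragraph: identifying, in the top internal degree, the precise map whose cokernel gives $\Tor_n(M)_{n+\rho}$, and checking it is the "degree-$0$ part" so that the answer is literally $\Tor_0(M)_\rho\otimes\sgn_{n+\rho}$ rather than some smaller quotient. Once that identification is in hand, everything else is a routine application of Proposition~\ref{prop:indnu} and Proposition~\ref{prop:nuses}: the hypothesis $(p-1)d\le k$ there translates to the constraint "$n\gg 0$" here (with $d=\maxdeg\Tor_0(M)_\rho\le\rho$ fixed and $k=n$), and since $\nu$ takes the value $n$ on the top piece while all lower internal-degree contributions to $\Tor_n(M)$ are, by Lemma~\ref{lem:tornu}-type reasoning, killed or have incomparable support, we conclude $\maxdeg(\Tor_n(M))=n+\rho$ with the stated $\nu$-value. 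This in turn gives $\reg(M)\ge\rho$, completing the proof that $\reg(M)=\rho$.
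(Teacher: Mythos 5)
Your upper bound $\reg(M)\le\rho$ via the Koszul complex is fine, but the step you yourself flagged as the main obstacle is resolved incorrectly, and the error is fatal to the lower bound and to the $\nu$-statement. In internal degree $n+\rho$ the Koszul complex computing $\Tor_\bullet(M)$ reads
\begin{displaymath}
\Ind_{S_{\rho-1}\times S_{n+1}}^{S_{n+\rho}}\bigl(M_{\rho-1}\otimes\sgn_{n+1}\bigr)\longrightarrow \Ind_{S_{\rho}\times S_{n}}^{S_{n+\rho}}\bigl(M_{\rho}\otimes\sgn_{n}\bigr)\longrightarrow 0,
\end{displaymath}
so $\Tor_n(M)_{n+\rho}$ is the cokernel of this induced differential, which mixes the two induction patterns; it is \emph{not} $\operatorname{coker}(M_{\rho-1}\to M_\rho)$ induced up against a sign, i.e.\ not $\Tor_0(M)_\rho\otimes\sgn_n$. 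Concretely, take $M$ with $M_0=M_1=\bk$, all transition maps the identity, and $M_k=0$ for $k\ge2$ (a truncation of $\bA$). Then $\rho=1$ and $\Tor_0(M)_1=0$, so your formula predicts $\Tor_n(M)_{n+1}=0$ and hence $\reg(M)=0$; but the strand above is $\sgn_{n+1}\to\Ind_{S_1\times S_n}^{S_{n+1}}\sgn_n$, whose cokernel is nonzero for every $n$ by a dimension count (in characteristic $0$ it is the irreducible indexed by $(2,1^{n-1})$), so indeed $\reg(M)=1=\rho$. The same example refutes your nonvanishing claim: $\rho=\maxdeg(M)$ does \emph{not} force a minimal generator in degree $\rho$, so $\Tor_0(M)_\rho$ can vanish. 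Minimality of the resolution is of no help either: the differentials vanish after $-\otimes_\bA\bk$, not after $-\otimes_\bA M$.

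The incoming differential therefore cannot be disposed of by an explicit identification; this is exactly what the invariant $\nu$ is for, and it is how the paper proceeds. One inducts on $\rho$: set $M_1=M_\rho$ (a submodule killed by $\bA_+$) and $M_3=M/M_1$, which has $\maxdeg(M_3)<\rho$, and use the exact sequence $\Tor_{n+1}(M_3)_{n+\rho}\to\Tor_n(M_1)_{n+\rho}\to\Tor_n(M)_{n+\rho}\to 0$, the right-hand $0$ coming from the inductive regularity bound on $M_3$. By Proposition~\ref{prop:tor-koszul} and Proposition~\ref{prop:indnu} the middle term has $\nu=n$, while by Lemma~\ref{lem:tornu} the left-hand term has $\nu\ge n+1$; Proposition~\ref{prop:nuses} then forces the quotient $\Tor_n(M)_{n+\rho}$ to be nonzero with $\nu=n$, without ever computing it. To salvage your direct approach you would have to prove precisely this non-cancellation for the cokernel displayed above, which amounts to redoing the $\nu$-argument.
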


\begin{proof}
Let $M_1$ be the degree $\rho$ piece of $M=M_2$, and let $M_3=M_2/M_1$. If $M_3 =  0$, then we are done by Proposition~\ref{prop:tor-koszul}. Now assume $M_3 \neq 0$. By induction on $\rho$, we can assume $\reg(M_3)<\rho$. We have an exact sequence
\begin{displaymath}
\Tor_{n+1}(M_3)_{n+\rho} \to \Tor_n(M_1)_{n+\rho} \to \Tor_n(M_2)_{n+\rho} \to 0.
\end{displaymath}
Note that $\Tor_n(M_3)_{n+\rho}=0$ by the bound on the regularity of $M_3$, which is why we have a 0 on the right above. Since $M_1$ is concentrated in one degree, we have $\Tor_n(M_1)=M_1 \otimes \sgn_n$. So by Proposition~\ref{prop:indnu}, the lemma is true for $M_1$. By Lemma~\ref{lem:tornu}, the leftmost term above has $\nu \ge n+1$. Since the middle term has $\nu=n$, we see (from Proposition~\ref{prop:nuses}) that the rightmost term is non-zero and has $\nu=n$, which completes the proof.
\end{proof}

\begin{proposition} \label{prop:torsion-complex}
Let $M$ be a finite length complex of finitely generated torsion $\FI$-modules. Put
\begin{displaymath}
\rho=\max_{i \in \bZ}(i+\maxdeg{\rH^i(M)}).
\end{displaymath}
Then the regularity of $M$ is $\rho$. Moreover, if $r$ is minimal so that $\rho=r+\maxdeg{\rH^r(M)}$ then
\begin{displaymath}
\nu(\Tor_n(M)_{n+\rho})=n+r
\end{displaymath}
for all $n \gg 0$.
\end{proposition}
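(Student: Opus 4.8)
The plan is to induct on the length of the complex $M$, using the truncation (``stupid filtration'') that splits off the top nonzero term of $M$, and to feed the single-term case (Lemma~\ref{lem:torsion-reg}) together with the bookkeeping on $\nu$ (Lemma~\ref{lem:tornu} and Proposition~\ref{prop:nuses}) into the resulting long exact sequence. Concretely, let $m$ be minimal with $M^m \neq 0$ and let $\ell$ be maximal with $M^\ell \neq 0$. If $M$ has length one, i.e. $\ell = m$, then $M$ is (a shift of) a single finitely generated torsion module and the statement is exactly Lemma~\ref{lem:torsion-reg} after reindexing: $\rho = m + \maxdeg \rH^m(M)$, $r = m$, and $\nu(\Tor_n(M)_{n+\rho}) = n + m = n + r$ for $n \gg 0$. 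This is the base case.

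For the inductive step, write $M' = \sigma_{\geq m+1} M$ for the stupid truncation that forgets $M^m$, so there is a short exact sequence of complexes
\begin{displaymath}
0 \to M' \to M \to M^m[-m] \to 0,
\end{displaymath}
where $M^m[-m]$ is the complex with $M^m$ in degree $m$. This gives a long exact sequence in cohomology showing $\rH^i(M) = \rH^i(M')$ for $i > m$ and a four-term sequence relating $\rH^m(M)$, $M^m$, and $\rH^{m+1}(M')$, $\rH^{m+1}(M)$ near degree $m$; in particular $\max_{i \geq m+1}(i + \maxdeg \rH^i(M)) = \max_{i \geq m+1}(i + \maxdeg \rH^i(M'))$. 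Applying $- \otimes^{\rL}_{\bA} \bk$ produces, for each $n$, a long exact sequence
\begin{displaymath}
\cdots \to \Tor_{n}(M^m[-m])_{n+\rho} \to \Tor_{n-1}(M')_{n+\rho} \to \Tor_{n-1}(M)_{n+\rho} \to \Tor_{n-1}(M^m[-m])_{n+\rho} \to \cdots
\end{displaymath}
(with the cohomological indexing of this section; I will get the exact arrows from the triangle $M' \to M \to M^m[-m] \to$). I then split into two cases according to whether the maximum defining $\rho$ is achieved at $i = m$ or only at some $i \geq m+1$. In the first case (so possibly $r = m$), the term $\Tor_\bullet(M^m[-m])$ contributes in top degree $n + \rho$ with $\nu$ exactly $n + m$ by Lemma~\ref{lem:torsion-reg}, while $\Tor_\bullet(M')$ either does not reach degree $n+\rho$ (if the $i \geq m+1$ maximum is strictly smaller) or reaches it with $\nu \geq n + (m+1) > n + m$ by the inductive hypothesis and Lemma~\ref{lem:tornu}; in either subcase Proposition~\ref{prop:nuses} forces $\Tor_n(M)_{n+\rho}$ to be nonzero with $\nu = n + m = n + r$ for $n \gg 0$. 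In the second case, $r \geq m+1$, the controlling term is $\Tor_\bullet(M')$, whose top-degree piece at $n + \rho$ has $\nu = n + r$ by induction (using that $\rho$ and the minimal index $r$ agree for $M'$ and $M$ since the $i \leq m$ part does not attain the max), while the neighboring $\Tor_\bullet(M^m[-m])_{n+\rho}$ either vanishes in degree $n+\rho$ or has strictly larger $\nu$ ($= n+m$) -- wait, here $n + m < n + r$, so I must instead check it vanishes in this degree; since $\rho > m + \maxdeg M^m$ in this case, $\Tor_\bullet(M^m[-m])$ is supported in degrees $\leq n + m + \maxdeg M^m < n + \rho$, so it does vanish there -- and again Proposition~\ref{prop:nuses} gives $\nu(\Tor_n(M)_{n+\rho}) = n + r$. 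The claim about regularity follows formally: the computation shows $\maxdeg \Tor_n(M) = n + \rho$ for $n \gg 0$, so $\reg(M) \geq \rho$, and the reverse inequality $\reg(M) \leq \rho$ is the easy spectral-sequence direction (induct on length via the same triangle, using $\reg(M^m[-m]) = m + \maxdeg M^m \leq \rho$ and $\reg(M') \leq \rho$).

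The main obstacle I anticipate is the careful case analysis at the boundary degree $i = m$: I need to be sure that in the second case ($r \geq m+1$) the term $\Tor_n(M^m[-m])_{n+\rho}$ really is zero (not merely of large $\nu$), which relies on the strict inequality $m + \maxdeg M^m < \rho$ that holds precisely because $r$ was chosen minimal with $\rho = r + \maxdeg \rH^r(M)$ and $r \neq m$. A secondary subtlety is that ``for $n \gg 0$'' must be uniform over the finitely many steps of the induction, which is fine since each invocation of Lemma~\ref{lem:tornu} and Lemma~\ref{lem:torsion-reg} holds for all large $n$ and there are only finitely many. I also need to confirm that passing to $M'$ does not change which index is minimal-achieving when the max is attained at some $i \geq m+1$: this is immediate because $\rH^i(M) = \rH^i(M')$ for all $i > m$, and the $i = m$ (or lower) contribution to $M$ is by hypothesis not equal to $\rho$ in that case.
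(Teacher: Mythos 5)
Your strategy has a genuine gap: you filter by the \emph{stupid} truncation, splitting off the term $M^m$ of the complex, but the quantity $\rho$ is defined in terms of the cohomology of $M$, and the stupid truncation does not respect it. Two specific claims fail. First, you assert that $\Tor_\bullet(M^m[-m])$ ``contributes in top degree $n+\rho$ with $\nu$ exactly $n+m$ by Lemma~\ref{lem:torsion-reg}''; but Lemma~\ref{lem:torsion-reg} controls the degree $n+\maxdeg(M^m)$ piece, and in general $\maxdeg(M^m) > \maxdeg(\rH^m(M))$, since $\rH^m(M)=\ker(d\colon M^m\to M^{m+1})$ is only a submodule of $M^m$. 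In that situation the lemma says nothing about the degree $n+\rho$ piece of $\Tor_n(M^m[-m])$, and your Case 2 vanishing claim ($\rho > m+\maxdeg M^m$) is also unjustified, because minimality of $r$ only gives $\rho > m+\maxdeg \rH^m(M)$. Second, the identity $\max_{i\ge m+1}(i+\maxdeg\rH^i(M))=\max_{i\ge m+1}(i+\maxdeg\rH^i(M'))$ is false: from the long exact sequence of $0\to M'\to M\to M^m[-m]\to 0$ one gets $\rH^{m+1}(M')=\ker(d\colon M^{m+1}\to M^{m+2})$, which contains $\operatorname{im}(d\colon M^m\to M^{m+1})$ and can have much larger $\maxdeg$ than $\rH^{m+1}(M)$. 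A concrete test case: let $T$ be a torsion module with $\maxdeg T=10$, $S$ one with $\maxdeg S=1$, and let $M=(T\xrightarrow{\;\mathrm{id}\;}T)\oplus S[0]$ placed in degrees $0,1$. Here $\rho=1$ and $r=0$, but $\maxdeg M^0=10$ and your $M'=T[-1]$ has $1+\maxdeg\rH^1(M')=11$; every step of your bookkeeping (which piece reaches degree $n+\rho$, with what $\nu$) breaks down, because the large classes in $M^0$ and $M'$ cancel against each other in exactly the range your argument cannot see.

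The paper's proof repairs precisely this point: after arranging $M^i=0$ for $i<j$ with $j$ the lowest degree of nonzero cohomology, it splits off not the term $M^j$ but the subcomplex $M_1=\ker(d\colon M^j\to M^{j+1})$, which \emph{equals} $\rH^j(M)$ concentrated in degree $j$, so $\maxdeg(M_1)=\maxdeg\rH^j(M)$ and Lemma~\ref{lem:torsion-reg} applies at exactly the degree $n+\rho$ when $j=r$. The quotient $M_3=M/M_1$ then has $\rH^j(M_3)=0$ and $\rH^i(M_3)\cong\rH^i(M)$ for $i>j$, so the induction runs on the number of nonzero cohomology groups with $\rho$ and $r$ behaving predictably, and the four-term $\Tor$ sequence together with Lemma~\ref{lem:tornu} and Proposition~\ref{prop:nuses} closes the argument. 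If you want to salvage your write-up, replace the stupid truncation by this ``split off the cocycles at the bottom cohomological degree'' decomposition; the rest of your $\nu$-bookkeeping then goes through essentially as you intended.
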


\begin{proof}
Let $j$ be the minimal index so that $\rH^j(M) \ne 0$; we may as well assume that $M^i=0$ for $i<j$. Let $M_1$ be the kernel of $d \colon M^j \to M^{j+1}$, regarded as a complex concentrated in degree $j$, let $M_2=M$, and let $M_3=M_2/M_1$, so that we have a short exact sequence of complexes. Note that $\rH^j(M_1) \to \rH^j(M_2)$ is an isomorphism and $\rH^i(M_2) \to \rH^i(M_3)$ is an isomorphism for all $i>j$. Since $M_3$ has fewer non-zero cohomology groups than $M_2$, we can assume (by induction) that the proposition holds for $M_3$. The proposition holds for $M_1$ by Lemma~\ref{lem:torsion-reg}. We have an exact sequence
\begin{displaymath}
\Tor_{n+1}(M_3)_{n+\rho} \to \Tor_n(M_1)_{n+\rho} \to \Tor_n(M_2)_{n+\rho} \to \Tor_n(M_3)_{n+\rho} \to 0.
\end{displaymath}
Note that $\Tor_{n-1}(M_1)_{n+\rho}=0$, since the regularity of $M_1$ is at most $\rho$, which is why we have a 0 on the right. We now consider two cases:
\begin{itemize}
\item {\it Case 1:} $j=r$. We then have that $\nu(\Tor_n(M_1)_{n+\rho})=n+r$. By Lemma~\ref{lem:tornu}, $\nu(\Tor_{n+1}(M_3)_{n+\rho})>n+r$. If there exists $s>r$ such that $\rho=s+\maxdeg{\rH^s(M)}$  (and is chosen to be the smallest such $s$) then $M_3$ has regularity $\rho$ and $\nu(\Tor_n(M_3)_{n+\rho}) = n+s>n+r$ for $n \gg 0$; otherwise, $M_3$ has regularity $<\rho$ and $\Tor_n(M_3)_{n+\rho}=0$. Thus the two outside terms in the above 4-term sequence have $\nu>n+r$ (or vanish), and so $\Tor_2(M_2)_{n+\rho}$ is non-zero and has $\nu=n+r$.
\item {\it Case 2:} $j \ne r$. In this case, $M_1$ has regularity $<\rho$, and so $\Tor_n(M_1)_{n+\rho}=0$. Thus $\Tor_n(M_2)_{n+\rho}=\Tor_n(M_3)_{n+\rho}$, and the result follows by the inductive hypothesis. \qedhere
\end{itemize}
\end{proof}

We now prove our main result.

\begin{proof}[Proof of Theorem~\ref{thm:main-intro}] 
Let $T \to M \to F \to $ be the exact triangle as in Theorem~\ref{thm:semires}. By taking $\Tor$ we get a long exact sequence
\[
\cdots \to \Tor_n(T) \to \Tor_n(M) \to \Tor_n(F) \to \cdots.
\]
Note that $F$ is represented by a bounded complex of semi-induced modules and higher $\Tor$ groups of semi-induced modules are zero. Hence $F \otimes^{\rL}_{\bA}\bk$ is computed by the usual tensor product $F \otimes_{\bA}\bk$. Since $F$ is concentrated in non-negative cohomological degrees, this shows that $ \Tor_n(F) =  0$ for $n >0$. Thus, by the long exact sequence above, we have $\Tor_n(T) = \Tor_n(M)$ for $n>0$. Thus
\begin{displaymath}
\reg(M)=\max(t_0(M), \reg(T)).
\end{displaymath}
By Proposition~\ref{prop:loccoh}, we have $\rH^i(T) = \rH^i_{\fm}(M)$ for all $i$, and so $\maxdeg(\rH^i(T))=h^i(M)$. The theorem therefore follows from Proposition~\ref{prop:torsion-complex}.
\end{proof}

\end{document}